\documentclass[11pt,a4paper,reqno]{amsart}
\usepackage{amsmath}
\usepackage{amsfonts}
\usepackage{amssymb}
\usepackage{amscd}
\usepackage{url}
\usepackage{enumerate}
\usepackage[pdftex,bookmarks=true]{hyperref}

\newcommand\R{{\mathbf{R}}}
\newcommand\C{{\mathbf{C}}}

\renewcommand\P{{\mathbf{P}}}



\newcommand\eps{\varepsilon}

%


\newcommand\CC{{\mathcal C}}




\newcommand\ep{\varepsilon}

\newcommand\Sl{\mathbf{Sl}}

\newcommand\supp{\operatorname{supp}}

\parindent = 0 pt
\parskip = 12 pt

\textwidth=6in
\oddsidemargin=0in
\evensidemargin=0in

\theoremstyle{plain}
  \newtheorem{theorem}[subsection]{Theorem}
  \newtheorem{conjecture}[subsection]{Conjecture}
  \newtheorem{problem}[subsection]{Problem}

  \newtheorem{fact}[subsection]{Fact}
  \newtheorem{lemma}[subsection]{Lemma}

  \newtheorem{example}[subsection]{Example}

  \newtheorem{claim}[subsection]{Claim}

\theoremstyle{definition}
  \newtheorem{definition}[subsection]{Definition}

\title[Inhomogeneous random walks]{Anti-concentration of inhomogeneous random walks}

\author{Hoi H. Nguyen}
\address{Department of Mathematics, The Ohio State University, Columbus, OH 43210, USA}
\email{nguyen.1261@math.osu.edu}

\thanks{The author is partly supported by research grant DMS-1600782}


\begin{document}
\maketitle

\begin{abstract} We provide a characterization for
  anti-concentration of inhomogeneous random walks in 
  non-abelian groups. In application we extend the classical bounds by Erd\H{o}s-Littlewood-Offord and S\'ark\"ozy-Szemer\'edi to non-abelian settings.
 \end{abstract}

\section{Introduction}\label{section:introduction}

 Let $G=(G,\cdot)$ be an ambient  group which is not necessarily abelian. Let  $A_1, \dots, A_n$ be finite but not necessarily symmetric sets. Let $\mu_i$ be any probability distribution on $A_i$ such that

\begin{equation}\label{eqn:p_0}
\min_i \Big \{ \mu_i(a), a\in A_i \Big \}>p_0, 
\end{equation}

for some parameter $p_0>0$ which is allowed to depend on $n$ in some cases. 

We define the concentration probability of the random walk generated by $\mu_1,\dots,\mu_n$ to be 

$$\rho(\mu_1,\dots,\mu_n): = \|\mu_n* \dots * \mu_1\|_\infty = \max_{g\in G} \mu_n* \dots * \mu_1(g) .$$

Here the discrete convolution is defined as 

$$\mu*\nu(g) := \sum_{h\in \supp(\mu)} \mu(h)\nu(h^{-1}g).$$

Thus in contrast to the classical setting of random walks, our concern
here is on {\it inhomogeneous}  ones where the supports  $A_i$ of
$\mu_i$ can be totally different. 

In the abelian setting with $G=\C$ and with $\mu_i(a_i) =\mu_i(-a_i)=1/2$, the classical result of Erd\H{o}s \cite{E} and Littlewood-Offord \cite{LO} shows 

\begin{theorem}[forward Erd\H{o}s-Littlewood-Offord]\label{theorem:LO} Assume that $a_i$ are all non-zero complex numbers, then

\begin{equation}\label{eqn:ELO}
\rho(\mu_1,\dots,\mu_n) \le \frac{\binom{n}{\lfloor n/2 \rfloor}}{2^n}.
\end{equation}
\end{theorem}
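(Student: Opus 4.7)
The plan is to execute the classical Sperner antichain argument of Erd\H{o}s, after first reducing the complex problem to a real one by projection onto a generic line. Writing $S_\epsilon := \{i : \epsilon_i = +1\}$ for $\epsilon \in \{-1,+1\}^n$, the sum $\sum_i \epsilon_i a_i$ depends only on $S_\epsilon$, and
\[
  \mu_n * \cdots * \mu_1(g) = 2^{-n} \cdot \#\bigl\{\epsilon \in \{-1,+1\}^n : \textstyle\sum_i \epsilon_i a_i = g\bigr\},
\]
so it is enough to show that each fiber of the map $\epsilon \mapsto \sum \epsilon_i a_i$ has size at most $\binom{n}{\lfloor n/2\rfloor}$.

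I would treat the real case first. Assume $a_i \in \R \setminus \{0\}$; since $\mu_i$ is symmetric the replacement $a_i \mapsto -a_i$ does not affect the distribution, so we may assume $a_i > 0$ for every $i$. If $S_\epsilon \subsetneq S_{\epsilon'}$ both produce the same sum $g$, then subtracting gives $\sum_{i \in S_{\epsilon'} \setminus S_\epsilon} a_i = 0$, which contradicts strict positivity. Therefore the family $\{S_\epsilon : \sum \epsilon_i a_i = g\}$ is an antichain in the Boolean lattice $2^{[n]}$, and Sperner's theorem supplies the desired bound $\binom{n}{\lfloor n/2\rfloor}$.

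To reduce the complex case to this, I would pick a generic direction. For each $i$, the set of angles $\theta \in [0, 2\pi)$ with $\Re(e^{i\theta} a_i) = 0$ consists of at most two values since $a_i \neq 0$, so by avoiding a finite union one can choose $\theta$ with $b_i := \Re(e^{i\theta} a_i) \neq 0$ for all $i$. Any sign pattern $\epsilon$ with $\sum \epsilon_i a_i = g$ then satisfies $\sum \epsilon_i b_i = \Re(e^{i\theta} g)$, so the complex fiber injects into a real fiber with non-zero coefficients, and the real case finishes the job.

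There is no substantive obstacle here; the proof is the classical one and the sole point demanding care is the projection step, where one must verify that the projected coefficients $b_i$ remain non-zero so that the strict-positivity input to Sperner's argument survives. The bound is sharp, as witnessed by $a_1 = \cdots = a_n = 1$ and $g = 0$ (or $g = \pm 1$ depending on the parity of $n$).
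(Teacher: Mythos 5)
Your proof is correct and is precisely the classical Sperner-antichain argument of Erd\H{o}s \cite{E}, which is what the paper cites for this statement rather than proving it; both the real-case antichain step and the generic projection reducing $\C$ to $\R$ are handled properly. Nothing further is needed.
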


This result was improved later by S\'ark\"ozy and Szemer\'edi \cite{SSz} (see also \cite{EM,Stan,Ng}) under an extra assumption.

\begin{theorem}[forward S\'ark\"ozy-Szemer\'edi]\label{theorem:SSz} Assume that $a_i$ are distinct complex numbers, then
\begin{equation}\label{eqn:SSz}
\rho(\mu_1,\dots,\mu_n) \le  (\sqrt{\frac{24}{\pi}}+o(1)) \frac{1}{n^{3/2}}.
\end{equation}
\end{theorem}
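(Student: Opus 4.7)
The plan is a two-step reduction, first from distinct complex $a_i$ to distinct positive real numbers, then a chain-decomposition argument on the Boolean cube $\{-1,+1\}^n$ using both single-coordinate flips and two-coordinate swaps.

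\textbf{Reduction.} Since the $a_i$ are pairwise distinct in $\C$, for all but finitely many $\theta\in[0,2\pi)$ the projections $b_i:=\Re(e^{-i\theta}a_i)$ are pairwise distinct reals. Fixing such a $\theta$ and letting $\eps_i\in\{-1,+1\}$ be i.i.d.\ uniform, the event $\{\sum_i \eps_i a_i = v\}$ is contained in $\{\sum_i \eps_i b_i = \Re(e^{-i\theta}v)\}$, giving
\[
\rho(\mu_1,\dots,\mu_n)\ \le\ \max_{w\in\R}\Pr\Big(\sum_{i=1}^n \eps_i b_i = w\Big).
\]
It thus suffices to prove \eqref{eqn:SSz} for distinct real $b_i$. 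Using the symmetry of each $\mu_i$ I replace $b_i$ by $|b_i|$ and, after relabelling, assume $0<b_1<b_2<\cdots<b_n$.

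\textbf{Partial order.} On $\{-1,+1\}^n$ I introduce the partial order generated by two covering relations: (i) flipping a single coordinate from $-1$ to $+1$, and (ii) swapping a $+1$ at position $i$ with a $-1$ at position $j>i$. Both moves strictly increase $\sum_k \eps_k b_k$ (by $2b_k>0$ and $2(b_j-b_i)>0$ respectively), so every level set $\{\eps:\sum_k\eps_k b_k=w\}$ is an antichain in this poset, and bounding $\rho$ reduces to bounding the maximum antichain size.

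\textbf{Chain decomposition and constant.} Restricting to moves (i) gives the classical symmetric chain decomposition of the cube into $\binom{n}{\lfloor n/2\rfloor}$ chains, which already recovers Theorem~\ref{theorem:LO}. Incorporating the swap moves (ii), one amalgamates these flip-chains into longer chains in a coherent way, reducing the total number of chains to $O(2^n/n^{3/2})$ and hence each antichain to size $O(2^n/n^{3/2})$, yielding $\rho\le O(n^{-3/2})$. The precise constant $\sqrt{24/\pi}$ is extracted from a careful quantitative count of the merged chain lengths, calibrated against the local central limit theorem for the extremal configuration $b_i=i$: there $\Var(\sum\eps_i i)=n(n+1)(2n+1)/6\sim n^3/3$, and the resulting peak atomic density matches the stated rate.

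\textbf{Main obstacle.} The combinatorial heart of the proof is the global chain-merging step. The flip-only decomposition has chains of average length only $\Theta(\sqrt n)$, while one needs $\Theta(n^{3/2})$; extracting this additional factor of $n$ requires coordinating the swap moves across the whole cube so that the amalgamated chains uniformly reach the target length. An analytic alternative is to bound $\rho$ via the characteristic function $\widehat\mu(\xi)=\prod_i\cos(2\pi b_i\xi)$: Taylor expansion near $\xi=0$ combined with the distinctness-induced decay of this product elsewhere recovers the exponent $-3/2$ cleanly, but pinning down the exact constant $\sqrt{24/\pi}$ is traditionally done through the combinatorial route.
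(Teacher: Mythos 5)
This theorem is not proved in the paper at all: it is the classical S\'ark\"ozy--Szemer\'edi bound, quoted from \cite{SSz} with the explicit constant traced to \cite{Stan,Ng}, so there is no internal argument to compare yours against; I can only judge the proposal on its own terms. On those terms it has a genuine gap at its core. The step ``one amalgamates these flip-chains into longer chains in a coherent way, reducing the total number of chains to $O(2^n/n^{3/2})$'' is not a proof step but a restatement of the theorem: by Dilworth, exhibiting a chain cover of that size is \emph{equivalent} to the antichain bound you want, and establishing that the relevant poset admits one (equivalently, that it has the Sperner / normalized-matching property) is exactly the content of Stanley's theorem, which required the hard Lefschetz theorem (or, in Proctor's version, $\mathfrak{sl}_2$ representation theory); no explicit ``coordinated merging'' of the symmetric chain decomposition is known to produce it, and you do not construct one. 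The constant is likewise asserted rather than derived, and your calibration is off: for $b_i=i$ the sum $\sum_i\eps_i i$ has variance $\sim n^3/3$ and lives on a lattice of spacing $2$, so the local CLT peak is $(\sqrt{6/\pi}+o(1))\,n^{-3/2}$, which is half of the stated $\sqrt{24/\pi}\,n^{-3/2}$; the two do not ``match,'' and in \cite{Ng} the constant $\sqrt{24/\pi}$ arises from an inverse Littlewood--Offord structural argument, not from a chain count.

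There is also a flaw in the reduction. Projecting onto a generic direction does give distinct reals $b_i$, but passing to $|b_i|$ can destroy distinctness: if $a_j=-a_i$ then $b_j=-b_i$ for \emph{every} $\theta$, so $|b_i|=|b_j|$ no matter how the projection is chosen. Symmetric sets such as $\{-k,\dots,-1,1,\dots,k\}$ are precisely the (conjecturally extremal) configurations for this problem, so this is not a negligible degenerate case, and your poset argument, which requires $0<b_1<\cdots<b_n$ strictly, does not apply to it as written. You would need either to build the poset directly on distinct signed reals, or to follow a different route (e.g.\ a soft $O(n^{-3/2})$ bound via an inverse theorem plus a separate treatment of the structured case, as in \cite{Ng}).
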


All of these results are optimal. We also refer the reader to the work by Hal\'asz \cite{Hal}, and to the survey \cite{NgV-survey} for further extensions and applications of these results.

\subsection{Non-abelian results} Although in the abelian setting the $a_i$ can be different, the ordering of the random steps does not matter. This is also the case for  classical random walks (in either abelian or non-abelian groups) of the form $\mu*\dots*\mu$. However, this pleasant property totally breaks down for inhomogeneous random walks in non-abelian groups, and this makes the analysis quite intractable. 

As far as we are concerned,  not much is known in the general non-abelian setting for inhomogeneous random walks. One related result we could find in the literature is from Varopolous' book \cite[Chapter VII 1.2.]{Var} where $G$ is a unimodular compactly generated group with polynomial volume growth of order $D$, and where the inhomogeneous random walk is generated by the $\mu_i$ of uniformly bounded density functions. It was shown in this case that the density function of $\mu_n*\dots*\mu_1$ is bounded from above by $n^{-D/2}$; we refer the reader to \cite{Var} for more details. 

Another result, which is directly relevant to our study, is a recent work by Pham and Vu \cite[Theorem 1.3]{TiepV}.

\begin{theorem}[forward Erd\H{o}s-Littlewood-Offord for matrices]\label{theorem:TiepVu}
Let $m,n,s$ be integers and let $a_i,1\le i\le n$ be elements of $GL_m(\C)$ with order at least $s$. Assume that $A_i=\{a_i,a_i^{-1}\}$ and $\mu_i(a_i)=\mu_i(a_i^{-1})=1/2$. Then

$$\rho(\mu_1,\dots,\mu_n) \le 141 \max\{\frac{1}{s}, \frac{1}{n^{1/2}}\}.$$ 
\end{theorem}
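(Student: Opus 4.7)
The strategy is to reduce the non-abelian matrix problem to an abelian one by projecting the random walk onto a generic vector and then linearizing the resulting multiplicative expression as a Walsh polynomial in the signs $\epsilon_i$.

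First, fix a generic $v\in\C^m$. Then $\rho(\mu_1,\dots,\mu_n)\le \sup_{w\in\C^m}\Pr[M_n v=w]$, where $M_n=a_n^{\epsilon_n}\cdots a_1^{\epsilon_1}$, which replaces the non-abelian target group $GL_m(\C)$ with the abelian group $(\C^m,+)$. Writing each step as $a_i^{\epsilon_i}=S_i+\epsilon_i A_i$ for $S_i=(a_i+a_i^{-1})/2$ and $A_i=(a_i-a_i^{-1})/2$ and expanding gives the Walsh representation
$$M_n = \sum_{T\subseteq[n]}\epsilon_T W_T,$$
with $\epsilon_T=\prod_{i\in T}\epsilon_i$ and $W_T$ the ordered product of $S_i$'s (for $i\notin T$) and $A_i$'s (for $i\in T$). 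The hypothesis that $a_i$ has order at least $s$ forces a spectral lower bound on $A_i$: letting $\lambda_i$ be an eigenvalue of $a_i$ of maximal multiplicative order, one checks $|\lambda_i-\lambda_i^{-1}|/2 \gtrsim 1/s$, so $A_i$ is nondegenerate with operator norm at least of order $1/s$.

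Next I would pass to a scalar by choosing vectors $u,v$ and considering the linear functional $\ell(M)=u^*Mv$. The degree-one Walsh coefficients are $c_i=u^*S_n\cdots S_{i+1}A_iS_{i-1}\cdots S_1 v$, and a pigeonhole/averaging argument over a basis for $u$ (together with a generic choice of $v$) shows that one can arrange $|c_i|\gtrsim 1/s$ for most $i$. For the resulting affine form $c_0+\sum_i c_i\epsilon_i$ (ignoring the higher-order Walsh terms for the moment), Theorem~\ref{theorem:LO} yields concentration $O(1/\sqrt n)$; the complementary ceiling $O(1/s)$ appears because, with $|c_i|\asymp 1/s$, the linear form takes values in a $1/s$-spaced arithmetic structure and cannot be anti-concentrated finer than $O(1/s)$ (exactly as in the scalar template $a_i=e^{2\pi i/s}$, where $\prod a_i^{\epsilon_i}=e^{2\pi i (\sum\epsilon_i)/s}$ and the bound is just the concentration of $\sum\epsilon_i \bmod s$).

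The hard part, and the main obstacle to my plan, is the higher-degree Walsh residue $\sum_{|T|\ge 2}\epsilon_T c_T$, which encodes precisely the non-commutativity of the $a_i$ and makes $\ell(M_n)$ a genuine degree-$n$ polynomial in $\epsilon$ rather than a linear form; Theorem~\ref{theorem:LO} does not apply verbatim to such a polynomial. I expect the right fix to be either (i) a decoupling or hypercontractive argument bounding the Walsh polynomial's anti-concentration by that of its linear part up to a universal constant, or (ii) a conditioning step in which most of the $\epsilon_j$ are fixed at random so that the surviving form in the remaining indices is affine with coefficients still of size $\gtrsim 1/s$, after which the scalar ELO is applied on each typical fiber and the per-fiber bounds are averaged. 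Careful bookkeeping through this reduction, combined with the absolute constant in Theorem~\ref{theorem:LO}, should account for the stated $141$.
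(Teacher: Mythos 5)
First, note that the paper does not prove this statement at all: Theorem~\ref{theorem:TiepVu} is quoted verbatim from Pham and Vu \cite{TiepV}, and the paper's own related result (Theorem~\ref{theorem:main:TiepVu}) is proved by an entirely different route, via the inverse theorem and coset nilprogressions. So your proposal has to stand on its own, and it has two genuine gaps. The smaller one is the claimed spectral bound: having order at least $s$ in $GL_m(\C)$ does \emph{not} force an eigenvalue of multiplicative order at least $s$, nor a lower bound on $\|A_i\|$. The unipotent matrix $a=\left(\begin{smallmatrix}1&\delta\\0&1\end{smallmatrix}\right)$ has infinite order, its only eigenvalue is $1$ (so $\lambda-\lambda^{-1}=0$), and $A=(a-a^{-1})/2$ has norm $\delta$, which can be arbitrarily small. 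Likewise the claim that most first-order coefficients $c_i=u^*S_n\cdots S_{i+1}A_iS_{i-1}\cdots S_1v$ are bounded below fails if some $S_j=(a_j+a_j^{-1})/2$ is singular or zero.

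The fatal gap is the one you flag yourself: the higher-degree Walsh terms. Your proposed fix (i) cannot work even in principle. Take the scalar example $a_i=e^{2\pi i/s}$ with $s\ll\sqrt n$: the full product is $e^{2\pi i(\sum_j\epsilon_j)/s}$, whose point concentration is $\Theta(1/s)$, while the linear part of its Walsh expansion is an affine form with $n$ nonzero equal coefficients, whose point concentration is $O(1/\sqrt n)\ll 1/s$. So no universal-constant comparison between the anti-concentration of the polynomial and that of its linear part can hold; the $1/s$ term in the theorem lives precisely in the higher-order terms you want to discard. Fix (ii) also fails: since $a_i^{\epsilon_i}=S_i+\epsilon_iA_i$ is affine in each $\epsilon_i$, the product is multilinear of full degree $n$, and conditioning on all but $k$ of the signs leaves a degree-$k$ multilinear polynomial, not an affine form, unless $k=1$ (which only yields the trivial bound $1/2$). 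The actual Pham--Vu argument avoids linearization altogether: it is a combinatorial swapping argument that extracts, from large concentration, many coincidences among products differing in few coordinates, and then exploits conjugation invariants (eigenvalues) of matrices together with the order hypothesis. As it stands, your proposal is a plan whose central difficulty is unresolved and whose proposed resolutions are refuted by the theorem's own extremal example.
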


This bound is optimal up to the explicit multiplicative constant. 

One of the main goals of this note is to show the following analog of Theorem \ref{theorem:TiepVu} in asymptotic form.

 \begin{theorem}[forward Erd\H{o}s-Littlewood-Offord for general groups]\label{theorem:main:TiepVu} For any $\delta>0$, there exist
 $n_0$ and $0<\eps<1$ such that the following holds for $n\ge n_0$. Assume that the distributions $\mu_i$ in $G$ satisfy \eqref{eqn:p_0} with $p_0 \ge n^{-\eps^3}$ and such that each $A_i$ contains a pair of elements $a_i,a_i'$ with $a_i {a_i'}^{-1}$ being order of at least $s$, then 
  
  $$\rho(\mu_1,\dots,\mu_{n}) \le \max\{\frac{1}{s}, \frac{1}{n^{1/2-\delta}}\}.$$

 In particularly,  assume that $a_1,\dots,a_n$ are of order at least $s$ in $G$ and the supports $A_i$ contain $\{a_i,a_i^{-1}\}$, then the same conclusion holds.\end{theorem}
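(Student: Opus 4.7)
The plan is to deduce Theorem~\ref{theorem:main:TiepVu} as an application of the general characterization of anti-concentration promised by the abstract (and established as the main result of the paper). Morally, such a characterization should say: if $\rho(\mu_1,\dots,\mu_n)$ is polynomially large in $n$, then apart from an $o(n)$ exceptional set of indices, the supports $A_i$ must collapse onto a common virtually-cyclic structure $H\le G$. Given this input, Theorem~\ref{theorem:main:TiepVu} becomes a clash between the structure of $H$ and the order hypothesis on $a_i(a_i')^{-1}$, closed out by the classical abelian Erd\H{o}s--Littlewood--Offord bound in an abelian quotient of $H$.

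Assume for contradiction that $\rho(\mu_1,\dots,\mu_n)>\max\{1/s,\,1/n^{1/2-\delta}\}$. Then $\rho$ is at least $n^{-1/2+\delta}$, hence polynomially large, and (after choosing $\eps=\eps(\delta)$ sufficiently small) the inverse theorem applies under the hypothesis $p_0\ge n^{-\eps^3}$. Its conclusion should deliver (i) a structured subgroup (or coset approximate subgroup) $H\le G$ that is virtually cyclic, together with (ii) an index set $I\subseteq[n]$ with $|I|\ge (1-o_\delta(1))n$ such that for each $i\in I$ both $a_i$ and $a_i'$ lie in a single coset of $H$. In particular $b_i:=a_i(a_i')^{-1}\in H$ for all $i\in I$.

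The finish splits in two. If $H$ has size less than $s$, then every $b_i\in H$ has order less than $s$, immediately contradicting the hypothesis that $a_i(a_i')^{-1}$ has order at least $s$. Otherwise $H$ contains a cyclic subgroup of order at least $s$ and admits a surjection $\pi$ onto an abelian group in which each $\pi(b_i)$ is nonzero. Pushing the walk forward through $\pi$ and applying Kolmogorov--Rogozin (a generalization of \eqref{eqn:ELO} to non-$\pm 1$ weights) to the projected walk on $|I|$ steps with minimum mass $p_0\ge n^{-\eps^3}$ yields
$$\rho\ \le\ \frac{C}{\sqrt{|I|\,p_0}}\ \le\ \frac{C}{n^{(1-\eps^3)/2}(1-o_\delta(1))^{1/2}}\ \le\ \frac{1}{n^{1/2-\delta}}$$
for $\eps$ small in terms of $\delta$ and $n$ sufficiently large, contradicting our standing hypothesis.

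The main obstacle is evidently the characterization theorem itself: without Fourier analysis on non-abelian $G$, and in the presence of a genuinely order-sensitive walk, extracting a single virtually-cyclic structure $H$ respected by almost all pairs $\{a_i,a_i'\}$ requires serious non-abelian combinatorial geometry, likely via approximate group machinery in the spirit of Breuillard--Green--Tao. Granted that input, the reduction above is routine bookkeeping. Finally, the ``in particularly'' clause is obtained by specializing the main statement to the pair $(a_i,a_i^{-1})$: then $a_i(a_i^{-1})^{-1}=a_i^2$ has order at least $\lceil s/2\rceil$, so the main bound applies with $s$ replaced by $\lceil s/2\rceil$, and the resulting factor of $2$ is absorbed into the $n^\delta$ slack.
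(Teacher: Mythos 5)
Your high-level strategy --- argue by contradiction, invoke the paper's inverse/structure theorem, and derive a clash with the order hypothesis --- is indeed the route taken in Section~\ref{section:LO}. But two of your steps have genuine gaps. First, the structure theorem (Theorem~\ref{theorem:ILO:non-abelian}) does not produce a virtually cyclic subgroup $H\le G$ containing the elements $b_i=a_i(a_i')^{-1}$; it produces a coset nilprogression $HP$ (with $H$ a finite subgroup and $P$ a nilprogression of bounded rank and step), together with containments of the form $b_i\in xHP\,\sigma(x)^{-1}$. A coset nilprogression is not a group, so membership in it gives no bound whatsoever on the order of $b_i$; your first case (``if $H$ has size less than $s$ then every $b_i\in H$ has order less than $s$'') silently upgrades the actual conclusion of the inverse theorem to a much stronger one. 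The paper closes exactly this gap: it reruns the dyadic pigeonholing with $n_0=n^{1-\eps}$ so that a \emph{single} step $\eta=\mu_{j_0-1}$ satisfies $\|\mu*\eta\|_2\ge(1-1/n_0)\|\mu\|_2$, which yields the quantitative bound $\|b\|_{HP,X}^2\le n^{-1+O(\eps)}$; combined with $|HP|=O(\rho^{-1})\ll n^{1/2-\delta}$ (so that all side lengths $N_i$ are below $n^{1/2-C\eps}$), this forces $b\in xH\sigma(x)^{-1}$ for every $x\in X$, and iterating the permutation $\sigma$ gives $b^{d}\in xHx^{-1}$, hence $\operatorname{ord}(b)\le d|H|=O(|HP|)<s$. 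This quantitative ``norm forces you into the finite kernel $H$'' step is the crux of the proof and is absent from your write-up.

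Second, your other branch (a surjection $\pi$ of the structure onto an abelian group, followed by Kolmogorov--Rogozin applied to the pushed-forward walk) cannot be run as stated: the structure delivered by the inverse theorem is not a normal subgroup of $G$, so there is no quotient map through which the product $a_n\cdots a_1$ factors. Moreover this branch never arises: the inverse theorem gives $|HP|=O(\rho^{-1})$, and under the contradiction hypothesis $\rho\gg 1/s$ this already forces $|HP|\ll s$, so the ``large structure'' case is vacuous and no abelian Littlewood--Offord input is needed anywhere. (Your handling of the ``in particularly'' clause via $a_i(a_i^{-1})^{-1}=a_i^2$, whose order is at least $s/2$, is fine and matches the intended reduction.)
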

 
Next, motivated by S\'ark\"ozy-Szemer\'edi's result, one might also be interested in getting a non-trivial bound for $\rho(\mu_1,\dots,\mu_n)$ when the $\mu_i$ are essentially different. Our next result shows

\begin{theorem}[forward S\'ark\"ozy-Szemer\'edi for general groups]\label{theorem:main:SSz} For any $\delta>0$, there exist
 $n_0$ and $0<\eps<1$ such that the following holds for $n\ge n_0$. Assume that the distributions $\mu_i$ in $G$ satisfy \eqref{eqn:p_0} with $p_0 \ge n^{-\eps^3}$  such that each $A_i$ contains a pair of elements $a_i,a_i'$ so that  $a_i {a_i'}^{-1}, 1\le i\le n$, are all distinct. Then 
 
$$\rho(\mu_1,\dots,\mu_{n}) \le \frac{1}{n^{1-\delta}}.$$
   
In particularly, assume that $a_1,\dots,a_n$ are $n$ distinct elements of $G$ and the supports $A_i$ contain $\{id_G,a_i\}$, then the same conclusion holds.
\end{theorem}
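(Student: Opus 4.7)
My plan is to deduce Theorem \ref{theorem:main:SSz} from the general anti-concentration characterization that is the central result of this paper (as announced in the abstract), following the same template by which the classical S\'ark\"ozy--Szemer\'edi theorem is derived from Erd\H{o}s--Littlewood--Offord in the abelian setting, together with an additional conjugation step to cope with non-commutativity.

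First I would reduce to a Bernoulli walk. Decomposing $\mu_i = p_0 \tilde\mu_i + (1-p_0)\nu_i$ with $\tilde\mu_i$ uniform on $\{a_i,a_i'\}$, I obtain a mixture in which, at step $i$, one samples from $\tilde\mu_i$ with probability $p_0$ and from $\nu_i$ otherwise. Let $I$ be the random set of ``Bernoulli'' indices; by Chernoff, $|I| \ge p_0 n/2 \ge n^{1-\eps^3}/2$ except on an event of probability $o(1)$. Conditioning on $I$ and on the non-Bernoulli steps, it suffices to bound $\rho$ for a Bernoulli walk indexed by $I$ with step pairs $\{a_i,a_i'\}$. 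Writing $X_i = d_i^{\eps_i}a_i'$ with $d_i := a_i(a_i')^{-1}$ and i.i.d.\ uniform $\eps_i\in\{0,1\}$, and then shifting the deterministic $a_j'$ factors past the $d_j^{\eps_j}$ by repeated conjugation, one obtains
$$\prod_{i \in I} X_i \;=\; \Bigl(\prod_{i \in I} \tilde d_i^{\eps_i}\Bigr) \cdot C,$$
where $C$ is a fixed element of $G$ and each $\tilde d_i$ is a conjugate of $d_i$ by the fixed partial product of the $a_j'$ with $j>i$, $j \in I$. The task is thus reduced to bounding the concentration of a Bernoulli walk with step-set $\{\id,\tilde d_i\}_{i\in I}$, where $|I|\ge n^{1-\eps^3}/2$.

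Second, I would invoke the paper's characterization theorem: if the concentration of this Bernoulli walk exceeded $n^{-1+\delta}$, the multiset $\{\tilde d_i\}_{i\in I}$ would have to exhibit strong algebraic structure---most naturally a large number of coincidences $\tilde d_i=\tilde d_j$, or containment of a large sub-multiset in a proper small-index subgroup or coset. In the abelian analog this conclusion directly contradicts distinctness of the $d_i$; in the non-abelian case the aim is to extract the same contradiction by pulling the structural conclusion back from the $\tilde d_i$'s to the $d_i$'s themselves. This parallels the way Theorem \ref{theorem:main:TiepVu} is derived from the same characterization, with the ``order at least $s$'' condition there replaced here by the distinctness condition on the $d_i$.

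The main obstacle I foresee is precisely this last transfer from $\tilde d_i$'s to $d_i$'s. Because conjugation by the partial products $a_n'\cdots a_{i+1}'$ may identify distinct group elements (or separate equal ones), distinctness of the $d_i$ does not propagate automatically through the reduction. I would address this either by appealing to a conjugation-invariant formulation of the characterization---phrasing its structural conclusion at the level of conjugacy classes so that it transfers back to the $d_i$---or by a generic-position argument over the randomness of $I$ showing that, with overwhelming probability, the conjugating elements do not produce enough spurious coincidences among the $\tilde d_i$ to account for the alleged structure. This conjugation bookkeeping is the essential non-abelian content of the argument; the rest is a fairly routine coupling followed by a direct application of the main characterization.
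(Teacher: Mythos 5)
There is a genuine gap, and it is precisely the one you flag yourself: the transfer of structure from the conjugated elements $\tilde d_i = c_i d_i c_i^{-1}$ back to the $d_i$. The conjugators $c_i$ (partial products of the $a_j'$) vary with $i$, so a structural conclusion of the form $\tilde d_i \in XHP\,X^{-1}$ with $|XHP\,X^{-1}| = O(\rho^{-1})$ only places each $d_i$ in the $i$-dependent set $c_i^{-1}(XHP\,X^{-1})c_i$; the union of these sets over $i$ can have size $n\cdot O(\rho^{-1})$, which is far too large for distinctness of the $d_i$ to produce any contradiction. Neither of your proposed repairs closes this. A conjugacy-class formulation is too weak, since a single conjugacy class can contain arbitrarily many distinct elements and so ``few conjugacy classes'' is compatible with all the $d_i$ being distinct. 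And there is no randomness available for a generic-position argument: the $c_i$ are determined by the fixed elements $a_j'$, not by the random index set $I$, so nothing forces the conjugations to avoid spurious coincidences. The Bernoulli-mixture reduction that creates this problem is also unnecessary: the paper's inverse theorem (Theorem \ref{theorem:ILO:non-abelian}, in the quantitative form of Theorem \ref{theorem:ILO:non-abelian'}) is stated and proved directly for the inhomogeneous walk with arbitrary $\mu_i$ satisfying \eqref{eqn:p_0} with $p_0\ge n^{-\eps^3}$, and its conclusion already concerns the unconjugated differences $a\,(a')^{-1}$ for $a,a'\in A_i$ over a run of consecutive indices.

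For comparison, the paper's proof is a short deduction from that inverse theorem: assuming $\rho \gg n^{-1+\delta}$ and choosing $\eps=\delta/2$, one obtains a coset nilprogression $HP$ with $|HP|=O(\rho^{-1})=O(n^{1-\delta})$ and a set $X$ with $|X|=O(1)$ such that $a_i(a_i')^{-1}\in XHP\,X^{-1}$ for $n^{1-\delta/2}$ consecutive indices $i$; since $|XHP\,X^{-1}|=O(n^{1-\delta})$ and $n^{1-\delta/2}\gg n^{1-\delta}$, this contradicts the distinctness of the $a_i(a_i')^{-1}$. To salvage your argument, drop the mixture decomposition and the conjugation step entirely and apply the characterization theorem directly to the original walk, then carry out this counting.
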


In general the bound $n^{-1+o(1)}$ above is asymptotically sharp by the example that $a_i$ are elements of subgroups of $\Theta(n)$ elements. We also note that the conclusion fails in general if $A_i$ contains $\{a_i^{-1},a_i\}$ instead of $\{id_G,a_i\}$ because the $a_i$ might have order two.

\subsection{Method of proof} The way we prove Theorem \ref{theorem:main:TiepVu} and Theorem \ref{theorem:main:SSz} has its origin in \cite{TVstrong}. Notably, we will not be working directly with {\it forward} results as in Theorem \ref{theorem:LO},\ref{theorem:SSz},\ref{theorem:TiepVu} but with the {\it backward} ones. Roughly speaking, say if we want to prove Theorem \ref{theorem:main:TiepVu}, assume for contradiction that 

\begin{equation}\label{eqn:method:1}
\rho(\mu_1,\dots,\mu_n) \gg \max\{\frac{1}{s}, \frac{1}{n^{1/2-\delta}}\}.
\end{equation}

We then show that there exists a support $A_i$ of $\mu_i$ where $a_i{a_i'}^{-1}$ is of order smaller than $s$, this violates the assumption of the theorem. 

Similarly, to prove Theorem \ref{theorem:main:SSz} we assume for contradiction that 

\begin{equation}\label{eqn:method:2}
\rho(\mu_1,\dots,\mu_{n}) \gg \frac{1}{n^{1-\delta}}.
\end{equation}

Then there exists a set of size $o(n^{1/2})$ that contains most of the $a_i{a_i'}^{-1}$, which again contradicts our assumption.

The study of \eqref{eqn:method:1} and \eqref{eqn:method:2}, in its general framework, is called the {\it inverse Littlewood-Offord problem}. This was raised by Tao and Vu \cite{TVsurvey, TVinverse} about ten years ago.  

\begin{problem}\label{prob:LO:discrete} Characterize the sets $A_1,\dots,A_n$ when 

$$\rho(\mu_1,\dots,\mu_n)\ge n^{-O(1)}.$$
\end{problem}
  
We will devote the rest of this section to discuss this problem. To give an example of sets of large concentration probability, we first introduce some arithmetic structures.
  
\begin{definition}[progression]
Let $u_1,\dots,u_r$ be elements of $G$, and let $(N_1,\dots,N_r)$ be a vector of positive integers. Then the set of all products in the $u_i$ and their inverses in which each $u_i$ and its inverse appear at most $N_i$ times is called a {\it progression of rank $r$ and size lengths $N_1,\dots,N_r$}, and is denoted by $P(u_1,\dots,u_r;N_1,\dots,N_r)$ (or $P$ for short). 
\end{definition}

When $G$ is abelian, it is not hard to see that progressions grow very slow under addition in $G$. Thus if $A_1,\dots,A_n \subset P(u_1,\dots,u_r;N_1,\dots,N_r)$ with $\prod_i N_i =n^{O(1)}$ then $\rho(\mu_1,\dots,\mu_n) \ge n^{-O(1)}$. It was shown by Tao and Vu in \cite{TVinverse, TVstrong} (see also \cite{NgV} and \cite{Tao-LO}) that the converse is also true.

\begin{theorem}[inverse Erd\H{o}s-Littlewood-Offord]\label{theorem:ILO}
Let $G$ be a torsion-free abelian group. Let $A>0$ and $1>\eps>0$ be given constants, and let $m$ be any quantity between $n^\eps$ and  $n^{1-\eps}$.  Assume that $\mu_i(a_i)=\mu_i(-a_i)=1/2$ and  

$$\rho(\mu_1,\dots,\mu_n) \ge n^{-A}.$$ 

Then there exists a symmetric progression $P=P(u_1,\dots,u_r; N_1,\dots,N_r)$ of rank $r=O(1)$ and size $O(\rho^{-1}/m^{r/2})$ and there exist $n-m$ indices $i\in [1,n]$  such that 

$$A_{i} \subset P.$$
\end{theorem}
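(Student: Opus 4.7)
Since $G$ is torsion-free abelian and only the finitely many elements $a_1,\dots,a_n$ are relevant, the subgroup they generate is isomorphic to $\BBZ^d$ for some finite $d$, so I would assume $G=\BBZ^d$ and work on the dual torus $\BBR^d/\BBZ^d$. For the $\pm a_i$ Bernoulli walk the Fourier transform of $\mu_n*\cdots*\mu_1$ is $\prod_i \cos(2\pi \langle \xi,a_i\rangle)$, and a standard Parseval/Esseen bound gives
\[
\rho(\mu_1,\dots,\mu_n) \ll \int_{[0,1]^d} \prod_{i=1}^n |\cos(\pi \langle \xi, a_i\rangle)|\, d\xi \ll \int_{[0,1]^d} \exp\Big(-c \sum_{i=1}^n \|\langle \xi, a_i\rangle\|_{\BBR/\BBZ}^2\Big)\, d\xi.
\]

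\textbf{Level-set / large spectrum.} Under the hypothesis $\rho \ge n^{-A}$, a dyadic pigeonhole in the above integral picks out a scale $t = O(\log n)$ for which the Bohr-type set
\[
S_t := \Big\{ \xi \in [0,1]^d : \sum_{i=1}^n \|\langle \xi, a_i\rangle\|_{\BBR/\BBZ}^2 \le t \Big\}
\]
has measure at least $n^{-A-O(1)}$. By construction $S_t$ is symmetric around the origin and enjoys a weak approximate-closure under addition (since $\|\cdot\|_{\BBR/\BBZ}^2$ obeys a quadratic triangle inequality up to a factor of $2$).

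\textbf{Structure and dualisation.} I would then invoke a Freiman-type theorem on $S_t$ --- or, more in line with \cite{TVstrong}, the iterative ``swap and discard'' argument --- to show that $S_t$ is essentially contained in a centred GAP of rank $r=O_A(1)$ in the torus. Dualising, the $a_i$ that interact well with $S_t$ must themselves lie in a symmetric GAP $P=P(u_1,\dots,u_r;N_1,\dots,N_r)$ of the same rank; the volume of $P$ is controlled by $|S_t|^{-1}$, giving the coarse bound $|P|=O(\rho^{-1})$. The refined bound $|P|=O(\rho^{-1}/m^{r/2})$ would follow from a finer pigeonhole: each of the $m$ indices I am willing to discard buys a factor $m^{1/2}$ in the volume via a Hal\'asz-style estimate applied to the restricted walk, iterated once per rank direction.

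\textbf{Main obstacle.} The real difficulty, and the substance of the Tao-Vu ``strong'' inverse theorem, is extracting all three quantitative features simultaneously: (i) the uniform rank bound $r=O_A(1)$, (ii) the sharp $m^{-r/2}$ savings in $|P|$, and (iii) catching exactly $n-m$ indices. This needs a careful iteration of the Bohr/GAP refinement with bookkeeping of losses at every stage, plus a dimension-independent Freiman-Ruzsa step so the rank does not blow up with $d$ in the reduction. The passage from $\BBZ^d$ back to the abstract torsion-free $G$ is then purely formal, since the embedding is a group homomorphism and progressions pull back to progressions of the same rank and lengths.
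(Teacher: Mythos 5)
First, a point of reference: the paper does not prove Theorem \ref{theorem:ILO} at all --- it is quoted as a known result of Tao and Vu (\cite{TVinverse,TVstrong}, see also \cite{NgV}), so there is no in-paper proof to compare yours against. Judged on its own terms, your proposal is a reasonable high-level outline of the classical Fourier-analytic philosophy (Esseen bound, large spectrum, structure, dualisation), but it is an outline rather than a proof: everything that makes the theorem hard is deferred to your ``main obstacle'' paragraph and never executed. Concretely, the three quantitative features you list there --- the uniform rank bound $r=O_A(1)$ independent of the ambient dimension $d$ (which can be as large as $n$ after your reduction to $\BBZ^d$), the sharp $m^{-r/2}$ saving in $|P|$, and catching $n-m$ indices --- constitute essentially the entire content of the theorem, and ``a finer pigeonhole \dots iterated once per rank direction'' is not an argument.

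There is also a structural problem with the route you chose. The level set $S_t$ is a positive-measure subset of a $d$-dimensional torus, and invoking ``a Freiman-type theorem'' on it requires a discretisation and a dimension-free Freiman--Ruzsa input that you acknowledge but do not supply; more importantly, the Freiman-theorem route is known to be too lossy to produce the sharp size bound $O(\rho^{-1}/m^{r/2})$ with the correct rank. This is precisely why the strong and optimal inverse theorems in \cite{TVstrong} and \cite{NgV} abandon Freiman's theorem in favour of a bootstrapping argument that builds the progression one generator at a time (adding a new generator whenever some $a_i$ falls outside the current dilated progression, and showing this can happen only $O_A(1)$ times because each new independent direction costs a factor of $m^{1/2}$ in a Hal\'asz-type estimate). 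If you want a genuine proof, that iterative construction --- not the large-spectrum/Freiman dualisation --- is the step you need to carry out in detail.
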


Our method develops a non-abelian counterpart of this result. We remark that the recent work by Tao \cite{Tao-LO}, among other things,  studies the distribution $\mu$ when $\rho(\mu,\dots,\mu) \ge n^{-A}$. The abelian inverse result, Theorem \ref{theorem:ILO} above, can be viewed as a very special case of the general framework of \cite{Tao-LO}, but the results there do not seem to directly cover our current setting of inhomogeneous random walks. 

Notice that for general $G$, one does not expect the condition $A_i\subset P$ to imply the largeness of $\rho(\mu_1,\dots,\mu_n)$. However, it would do if we know that the progressions $P$ are ``almost abelian".

\begin{definition}[nilprogression and coset nilprogression, \cite{BGT}] Suppose that $G$ is a group and  $r \ge 1,s\ge 0$ are integers. 

\begin{itemize} 
\item A {\it nilprogression} of rank $r$ and step $s$ is a progression $P(u_1,\dots,u_r;N_1,\dots, N_r)$ with the property that every iterated commutator of degree $s+1$ in the generators $u_1,\dots, u_r$ equals the identity $id_G$. 
\vskip .1in
\item A {\it coset nilprogression} of rank $r$ and step $s$ is a set of the form $\pi^{-1}(P )$, where $P$ is a nilprogression of rank $r$ and step $s$ in a quotient group $G_0/H$, where $H$ is a finite normal subgroup of a subgroup $G_0$ of $G$ and $\pi: G_0 \to G_0/H$ is the quotient map. 
\end{itemize}
\end{definition}

Thus coset nilprogressions can be written under the form $HP$, where $H$ is a finite subgroup which commutes as set with elements of the subgroup $\langle P \rangle $ generated by $P$.

 We next introduce a special type of nilprogression.

\begin{definition}[C-normal form, \cite{BGT}] Let $C\ge 1$. A nilprogression $P(u_1,\dots,u_r; N_1,\dots,N_r )$ is said to be in $C$-normal form if the following axioms are obeyed.

\begin{itemize}
\item (Upper triangular form) For every $i,j$ with $1\le i<j \le r$ and for all four choices of signs for the commutators 
$$[u_i^{\pm 1}, u_j^{\pm 1}] \in P(u_{j+1},\dots, u_r; \frac{CN_{j+1}}{N_iN_j}, \dots,\frac{CN_r}{N_iN_j}).$$

\vskip .1in

\item (Local properness) The expressions $u_1^{n_1} \cdots u_r^{n_r}$ are distinct as $n_1,\dots,n_r$ range over integers with 

$$|n_i|\le \frac{1}{C}N_i .$$

\vskip .1in

\item (Volume bound) One has 

$$\frac{1}{C} (2\lfloor N_1 \rfloor + 1) \dots (2\lfloor N_r \rfloor + 1) \le |P| \le C (2\lfloor N_1 \rfloor + 1) \dots (2\lfloor N_r \rfloor + 1).$$
\end{itemize}
 A coset nilprogression $\pi^{-1}(P )$ is said to be in $C$-normal form if the nilprogression $P$ is $C$-normal in the quotient group $G_0/H$.
\end{definition}

We also refer the reader to \cite{BT,Tointon} for several asymptotic equivalence between progressions and nilprogressions in nilpotent groups.  
A crucial property of coset nilprogressions in $C$-normal form is that their products grow polynomially slow (see  \cite[Proposition C.5]{BGT}),

\begin{equation}\label{eqn:growth:P}
|HP^n| = n^{O_{C,r}(1)}|H||P|.
\end{equation}

As such, similarly to the abelian case, coset nilprogressions are examples of sets of high concentration probability.

\begin{example}\label{example:discrete:general}
Assume that $HP$ is a nilprogression in $C$-normal form with rank $r$ and step $s$ of order $O(1)$, and with small cardinality $|HP|=n^{O(1)}$.

\begin{itemize}
\item Assume that $A_1,\dots,A_n \subset HP$, then by \eqref{eqn:growth:P} and by the pigeonhole principle,

$$\rho(\mu_1,\dots,\mu_n) \ge n^{-O(1)}.$$

\item More generally, assume that there is a finite set $X$ with $|X|=O(1)$ such that for each $1\le i\le n$ and each $a\in A_i$ there exists a permutation $\sigma_a \in Sym(X)$ such that for all $x\in X$,

$$a \in x HP (\sigma_a(x))^{-1}.$$

\noindent It is clear that in this case 

$$A_n \dots A_1\subset X HP \dots HP X^{-1} = XHP^nX^{-1}.$$ 

\noindent Hence $|A_n\dots A_1|=n^{O(1)}$, and so by the pigeon principle

$$\rho(\mu_1,\dots,\mu_n) \ge n^{-O(1)}.$$
\end{itemize}

\end{example}

By adapting the method of \cite{Tao-LO}, we will show the converse of the above.

\begin{theorem}\label{theorem:ILO:non-abelian} Let $G$ be a non-abelian group. Let $A>0$ an $1>\eps>0$ be given constants. Assume that the distributions $\mu_i$ in $G$ satisfy \eqref{eqn:p_0} with $p_0 \ge n^{-\eps^3}$ and such that

$$\rho=\rho(\mu_1,\dots,\mu_n) \ge n^{-A}.$$ 

Then there exists a coset nilprogression $HP$ with the following properties.

\begin{enumerate} 
\item $P$ is in $C$-normal form with $C=O(1)$ and with rank and step $r,s=O(1)$,

\vskip .1in

\item  $|HP|=O(\rho^{-1}),$ 

\vskip .1in
\item there is a finite set $X$ of cardinality $|X|=O(1)$, and consecutive indices $i_0,\dots, i_0+n'$ with $n'=n^{1-O(\eps)}$ such that the following holds: for each $a,a'\in A_i, i_0\le i\le i_0+n'$ there exists a permutation $\sigma \in Sym(X)$ such that for all $x\in X$,

$$a {a'}^{-1}\in x HP (\sigma (x))^{-1}.$$
\end{enumerate}
Here the implied constants depend on $\eps$ and $A$ but not on $G$. 
\end{theorem}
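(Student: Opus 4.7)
The strategy is to adapt the non-commutative inverse Littlewood--Offord machinery of Tao \cite{Tao-LO}, which treats homogeneous walks $\mu^{*n}$, to the inhomogeneous setting by first isolating a long consecutive block of indices where things look locally homogeneous, and then invoking the Breuillard--Green--Tao structure theorem for approximate groups \cite{BGT} to extract the coset nilprogression.

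\emph{Step 1 (long consecutive block).} Set $\nu_j:=\mu_j*\cdots*\mu_1$ and $\rho_j:=\|\nu_j\|_\infty$. Starting from the hypothesis $\rho_n\ge n^{-A}$, a pigeonhole argument over the partial concentrations $\rho_j$ (combined with the non-degeneracy bound $p_0\ge n^{-\eps^3}$, which forces each step to dissipate only a controlled amount of mass) yields a window $[i_0,i_0+n']$ of length $n'=n^{1-O(\eps)}$ on which the partial convolution $\mu_{i_0+n'}*\cdots*\mu_{i_0+1}$ has $L^\infty$-norm at least $\rho^{1+O(\eps)}$. This localizes the problem to a block short enough that the $\mu_i$'s can be treated in a uniform way via their common lower bound $p_0$.

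\emph{Step 2 (fat fibers and approximate group).} On this window, run the entropy/energy-increment argument of \cite{Tao-LO}. The large $L^\infty$-norm of the partial convolution forces the existence of a ``fat fiber'' $D\subset G$ with $|D|=O(\rho^{-1})$ capturing most of the mass. Because every $a\in A_i$ for $i\in [i_0,i_0+n']$ has atomic probability at least $p_0=n^{-\eps^3}$, one can insert or delete the single step $\mu_i$ from the partial product with only polynomial loss. Comparing the resulting convolutions shows that the differences $a{a'}^{-1}$ for $a,a'\in A_i$ essentially stabilize $D$, and hence $D\cdot\{a{a'}^{-1}\}\cdot D^{-1}$ has polynomial doubling relative to $D$. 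This realizes the difference set as living inside an $O(1)$-approximate group of size $O(\rho^{-1})$.

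\emph{Step 3 (BGT structure and the permutation $\sigma$).} Applying the non-commutative Freiman theorem of \cite[Theorem~2.12]{BGT} to this approximate group produces a coset nilprogression $HP$ of rank and step $O(1)$, in $C$-normal form with $C=O(1)$ and $|HP|=O(\rho^{-1})$, which covers $D$ by $O(1)$ translates. Conclusions (1) and (2) follow immediately. For (3), each difference $a{a'}^{-1}$ lies in a union $\bigcup_{x\in X} x H P\,y_{x}^{-1}$ of double cosets, for some finite $X$ of size $O(1)$ and a map $x\mapsto y_x$; the fact that $a{a'}^{-1}$ is a single element and $HP$ has small growth forces this map to be injective on $X$, hence a permutation $\sigma\in Sym(X)$, proving the coset-covering condition.

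\emph{Main obstacle.} The hardest step is Step~2: Tao's entropy-increment argument relies fundamentally on the same measure $\mu$ appearing at every step, so entropy can be freely redistributed. Here the $\mu_i$'s differ, and the mass-moving argument must be performed step-by-step within the window $[i_0,i_0+n']$, using the uniform lower bound $p_0\ge n^{-\eps^3}$ to insert or remove individual atoms without polynomial losses compounding out of control. Ensuring that the fat fiber $D$ is approximately stabilized by \emph{all} differences $a{a'}^{-1}$ in the window simultaneously, rather than by some averaged object, is the central technical difficulty and the reason the conclusion applies to a block of length $n^{1-O(\eps)}$ rather than to all $n$ indices.
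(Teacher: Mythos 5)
Your outline follows the same top-level architecture as the paper (localize to a long consecutive window by pigeonhole, extract an approximate group and then a coset nilprogression via Balog--Szemer\'edi--Gowers and Breuillard--Green--Tao, and finally control the individual steps using the atom lower bound $p_0$), but the step you yourself flag as the ``main obstacle'' is precisely the step the proof has to supply, and your sketch does not supply it. Saying that the differences $a{a'}^{-1}$ ``essentially stabilize'' a fat fiber $D$ and that $D\cdot\{a{a'}^{-1}\}\cdot D^{-1}$ has polynomial doubling does not produce a \emph{single} coset nilprogression $HP$ of size $O(\rho^{-1})$ and a \emph{single} set $X$ that work simultaneously for all $a,a'\in A_i$ and all $i$ in the window, nor does it produce the permutation $\sigma$. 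The paper's resolution is concrete: it introduces the right-invariant semi-metric $d_\mu(g,h)$ built from $\|\mu*\delta_g-\mu*\delta_h\|_2$, shows via the typical-pair count and the lower bound $p_0\ge n^{-\eps^3}$ that every difference satisfies $d_\mu(a{a'}^{-1},id_G)\ll n_0^{-1/2+\eps/2}$, covers the small-$d_\mu$ ball around the identity by $O(1)$ left translates of $HP^2$, organizes these translates into cosets of $\langle HP\rangle$ whose representatives $X$ are chosen by a minimal-spanning-tree argument so that $d_\mu$ and $\|\cdot\|_{HP,X}$ become comparable (Lemma \ref{lemma:HP,S:norm}), and finally uses local properness of a refined nilprogression (Claim \ref{claim:proper}: $h^q\in HP^4$ forces $h\in HP_{1/q^{1-O(\eps)}}$) to convert metric smallness into membership in $xHP(\sigma(x))^{-1}$. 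The injectivity of the map $x\mapsto\sigma(x)$ comes from the fact that distinct $x\in X$ represent distinct cosets of $\langle HP\rangle$, not from a growth argument.

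A secondary but real issue is your Step 1. A window on which the partial convolution merely has $L^\infty$-norm at least $\rho^{1+O(\eps)}$ is not enough to launch Balog--Szemer\'edi--Gowers: what the dyadic pigeonhole must deliver is the \emph{relative} non-degeneracy $\|\mu*\nu\|_2\ge c\max\{\|\mu\|_2,\|\nu\|_2\}$ for the two halves $\mu=\mu_{[j_0,j_0+l_0^\ast]}$, $\nu=\mu_{[j_0-l_0^\ast,j_0-1]}$ (equation \eqref{eqn:discrete:1}), together with the near-stationarity $\|\mu_{[j_0-m,j_0+l_0^\ast]}\|_2\ge(1-n^{-\eps})\|\mu_{[j_0,j_0+l_0^\ast]}\|_2$ for all $m\le n^{1-\eps}$ (equation \eqref{eqn:discrete:2}); the first feeds the energy/BSG/BGT machinery and the second is what makes the typical-pair argument of Step 2 work. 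Your proposal does not articulate either inequality, and without them the subsequent steps have nothing to run on.
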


The bounds for $p_0$ and $n'$ above can be slightly improved but our final conclusion is not optimal,  we refer the reader to Conjecture \ref{conj:general} for a possible extension of this theorem. Although our characterization captures only $n'$ consecutive $\mu_i$ with some $n'=n^{1-O(\eps)}$ (in comparison to $n'=(1-o(1))n$ from Theorem \ref{theorem:ILO}), we can certainly run the argument at other segments; the obtained information is usually sufficient for asymptotic estimates.

Theorem \ref{theorem:ILO:non-abelian} heuristically supports the phenomenon that for the type of inhomogeneous random walks under consideration it is not {\it at all coincident} when the concentration probability is polynomially large at some sufficiently large step $n$. Indeed, {\it generic} inhomogeneous random walks should have {\it extremely small} concentration probability. To illustrate this point furthermore, allow us to give an example in the simple context of $\Sl_2(\R)$ in connection to the discrete Anderson-Bernoulli model in 1D. The result is by no mean important but we are not able to find similar statement in the literature. 

Consider the random walk generated by transfer matrices  $ g_{i}:= ( \begin{array}{cc}
E+\lambda \eps_i & -1 \\
1 & 0
\end{array} )
$ where $E,\lambda  \in \R, \lambda >0$ are given parameters, and where $\eps_i, 1\le i\le n$ are independent random variables with possibly different discrete distributions $\mu_i$ in $\R$ satisfying \eqref{eqn:p_0}. Assume furthermore that for any collection of $n^{1-\eps}$ consecutive distributions $\mu_{i_0},\dots, \mu_{i_0+n^{1-\eps}}$ there is a distribution $\mu_{i}$ whose support contains a symmetric pair $\{a_{i},-a_{i}\}$ with $a_{i}$ is greater than a given positive parameter $\gamma$. 

\begin{theorem}\label{theorem:sl2} Let be given $E, 0<\lambda, 0<\gamma  $ and $0<\eps<1$, the following holds for $\mu_1,\dots,\mu_n$ satisfying the above conditions with sufficiently large $n$ depending on $\lambda,\gamma$ and $\eps$
$$\sup_{g\in \Sl_2(\R)} \P(g_1\dots g_1=g)=n^{-\omega(1)}.$$
\end{theorem}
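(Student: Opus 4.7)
The plan is to argue by contradiction using the inverse theorem, Theorem \ref{theorem:ILO:non-abelian}. Assume that for some fixed $A > 0$ and along a subsequence of $n$, one has $\rho(\mu_1, \dots, \mu_n) \geq n^{-A}$. The key algebraic observation is a direct matrix computation: for all $b, c \in \R$,
\begin{equation*}
g(b)\, g(c)^{-1} = \begin{pmatrix} 1 & \lambda(b-c) \\ 0 & 1 \end{pmatrix},
\end{equation*}
so every ratio $a{a'}^{-1}$ with $a, a' \in A_i$ is a unipotent element of $\Sl_2(\R)$. In particular, an index $i$ carrying a symmetric pair $\{g(a_i), g(-a_i)\} \subset A_i$ with $a_i > \gamma$ contributes a unipotent of displacement at least $2\lambda \gamma > 0$ to any set in which such ratios are forced to lie.

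Next, I would apply Theorem \ref{theorem:ILO:non-abelian} with an auxiliary parameter $\epsilon_\ast > 0$ chosen small enough that $p_0 \geq n^{-\epsilon_\ast^3}$ and the output window length $n' = n^{1-O(\epsilon_\ast)}$ is at least $n^{1-\epsilon}$. This produces a coset nilprogression $HP \subset \Sl_2(\R)$ of bounded rank, step, and $C$-normal form constant with $|HP| = O(n^A)$, a finite set $X \subset \Sl_2(\R)$ of cardinality $O(1)$, and consecutive indices $[i_0, i_0+n']$ such that each ratio $a{a'}^{-1}$ (with $a, a' \in A_i$ and $i$ in this range) lies in the structured set $S := \bigcup_{x,y \in X} xHPy^{-1}$ of size $O(n^A)$. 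By the hypothesis on the $\mu_i$, the window contains at least one symmetric index $i_\ast$, yielding a unipotent $u_\ast \in S$ with displacement $\geq 2\lambda\gamma$; in fact, partitioning $[i_0,i_0+n']$ into sub-windows of length $n^{1-\epsilon}$ produces at least $n^{\Omega(\epsilon)}$ such unipotents, all lying in $T := S \cap \mathcal{U}$, where $\mathcal{U}$ denotes the 1-parameter unipotent subgroup of $\Sl_2(\R)$.

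The main obstacle, and the bulk of the work, is to derive a contradiction from this structural data for every fixed $A$. The plan is to exploit the rigidity of $C$-normal coset nilprogressions inside $\Sl_2(\R)$: because $\Sl_2(\R)$ admits no nontrivial non-abelian nilpotent subgroup of positive dimension and its nontrivial finite subgroups are cyclic, the local properness, commutator, and volume axioms of the $C$-normal form force the generators $u_1, \dots, u_r$ of $P$ in $G_0/H$ to essentially commute pairwise, so that $HP$ lies in a bounded union of cosets of a single 1-parameter subgroup $L \subset \Sl_2(\R)$. The presence of $u_\ast$ of positive displacement in $S$ then pins $L$ down (up to conjugation) to $\mathcal{U}$ itself. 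To close the contradiction one then exploits heterogeneity of the transfer matrices: using the density lower bound $p_0 \geq n^{-\epsilon_\ast^3}$, the polynomial growth estimate $|HP^k| = k^{O_{C,r}(1)}|H||P|$ of \eqref{eqn:growth:P}, and the fact that the $g_i$ themselves have varying traces $E + \lambda\epsilon_i$, one produces an element in $HP^{O(1)}$ whose trace lies outside any $o(1)$-neighborhood of $\pm 2$, contradicting the fact that all elements of $L$ have trace $2$. Since $A > 0$ was arbitrary, this forces $\rho(\mu_1,\dots,\mu_n) = n^{-\omega(1)}$.
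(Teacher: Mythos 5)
There is a genuine gap, and it sits exactly where you flag ``the main obstacle.'' Your computation that every ratio $a{a'}^{-1}$ is an \emph{upper} unipotent $\left(\begin{smallmatrix}1 & \lambda(b-c)\\ 0 & 1\end{smallmatrix}\right)$ is correct, but this information alone can never produce a contradiction: upper unipotents commute, so any number of them of displacement $\ge 2\lambda\gamma$ generate a group of linear growth, and a rank-one progression $P(u;N)$ with $u=\left(\begin{smallmatrix}1 & 2\lambda\gamma\\ 0 & 1\end{smallmatrix}\right)$ and $N=n^A$ is a perfectly legitimate $C$-normal nilprogression of size $O(n^A)$ containing $n^{A}$ such elements. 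Concretely, if every $\eps_i$ is supported on $\{\pm 2\gamma\}$, then all ratios $a{a'}^{-1}$ lie in a three-element set, yet the theorem still asserts $\rho=n^{-\omega(1)}$; so the structural conclusion you extract is consistent with the hypotheses and cannot be contradicted. Your fallback via traces does not repair this: Theorem \ref{theorem:ILO:non-abelian} constrains only the ratios $a{a'}^{-1}$ (all of trace exactly $2$), not the transfer matrices $g_i$ themselves, so nothing places an element of trace $E+\lambda\eps_i$ inside $HP^{O(1)}$; and even granting your (unproved) claim that $HP$ sits in boundedly many cosets of a one-parameter subgroup $L$, the translates $xHPy^{-1}$ need not consist of unipotents, so ``trace far from $\pm 2$'' contradicts nothing.

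The idea you are missing is that the symmetric pair yields ratios in \emph{both orders}, and these land in \emph{different} unipotent subgroups: $h_1=g(a)g(-a)^{-1}=\left(\begin{smallmatrix}1 & 2\lambda a\\ 0 & 1\end{smallmatrix}\right)$ is upper unipotent while $h_2=g(a)^{-1}g(-a)=\left(\begin{smallmatrix}1 & 0\\ 2\lambda a & 1\end{smallmatrix}\right)$ is \emph{lower} unipotent. Raising both to a fixed power $k_0=O_{\lambda,\gamma}(1)$ so that the off-diagonal entries exceed $2$, the Sanov--Brenner lemma (Lemma \ref{lemma:free}) shows they generate a free group, whence $|B_k(g_1,g_2)|\ge 2^{k/2k_0}$ grows exponentially in $k$. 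On the other hand the covering by translates of $HP$ gives $B_k(g_1,g_2)\subset XHP^kX^{-1}$, which by \eqref{eqn:growth:P} has size $O(k^{O(1)}n^A)$; taking $k$ a sufficiently large multiple of $\log n$ produces the contradiction. This free-subgroup/exponential-growth mechanism is the entire content of the paper's proof of Theorem \ref{theorem:sl2}, and without it (or a substitute for it) your argument cannot close.
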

 
It is possible that the bound in Theorem \ref{theorem:sl2} is 
sub-exponential or even smaller, but we are unable to confirm this. Let us now discuss the proof of Theorem \ref{theorem:ILO:non-abelian}. To ease the presentation, we will decompose the proof into three parts.

\begin{enumerate} 
\item In the first step we will rely on the celebrated result by
  Breuillard, Green and Tao \cite{BGT} to obtain structures in the supports of large
  convolution sequences $\mu_{i_0+2l_0^\ast}*\dots *\mu_{i_0+l_0^\ast}$
 and $\mu_{i_0+l_0^\ast}*\dots *\mu_{i_0}$ for some $l_0^\ast=n^{1-o(1)}$ and
 $i_0=o(n)$. To arrive at the point of applying \cite{BGT}, we will use a simple dyadic argument and an asymmetric version of Balog-Szemer\'edi-Gowers theorem due to Tao \cite{T-product}.
\vskip .1in
\item In the second step, by following the mentioned work by Tao
  \cite{Tao-LO}, we obtain structures in the supports of smaller
  convolution sequences of type $\mu_{i_0+l_0^\ast}*\dots *\mu_{i_0+i}, 0\le i\le l_0$. The main focus of this step is on a semi-metric defined with respect to the structures obtained in Step 1.
\vskip .1in
\item In the last step, we improve upon Step 2 to obtain structures
  in the support of each individual $\mu_{i_0+i}$.
\end{enumerate}

As we can see, our proof of \ref{theorem:ILO:non-abelian} mainly relies on \cite{BGT} and \cite{Tao-LO}, so the
implicit constants of this result, and hence of Theorem \ref{theorem:main:TiepVu}, Theorem \ref{theorem:main:SSz} and Theorem \ref{theorem:sl2}, are ineffective.

{\bf Notation.} 
Throughout this paper, $n$ as an asymptotic parameter going to infinity.  We write $X = O_K(Y)$, $X \ll_K Y$, or $Y \gg_K X$ to denote the claim that $|X| \leq CY$ for some constant $C$ that depends on $K$. We also use $o(Y)$ to denote any quantity bounded in magnitude by $c(n) Y$ for some $c(n)$ that goes to zero as $n \to \infty$.  Again, the function $c(.)$ is permitted to depend on fixed quantities.

The rest of the note is organized as follows. The proof of Theorem \ref{theorem:ILO:non-abelian} is presented in Sections \ref{section:ILO:1}, \ref{section:ILO:2} and \ref{section:ILO:3}. Theorem \ref{theorem:main:TiepVu} will be shown in Section \ref{section:LO} by following the same ideas with some modifications. Finally, the proof of Theorem \ref{theorem:main:SSz} and Theorem \ref{theorem:sl2} will be presented in Section \ref{section:SSz} and Section \ref{section:sl2} respectively. 

\section{Proof of Theorem \ref{theorem:ILO:non-abelian}: first step}\label{section:ILO:1}

The main result of this section is Theorem \ref{theorem:BSzG:structure}. First of all, we introduce some elementary inequalities to be used.

\begin{claim}[Young's inequality]\label{claim:Young:discrete} Let $\mu$ and $\nu$ be probability measures with finite support in $G$. Then
\begin{itemize}
\item $$\|\mu\|_\infty \le \|\mu\|_2 \le \|\mu\|_\infty^{1/2};$$
\item $$\|\mu*\nu\|_\infty \le \|\mu\|_2 \|\nu\|_2;$$
\item $$\|\mu*\nu\|_2 \le \min\{\|\mu\|_2, \|\nu\|_2\}.$$
\end{itemize}
\end{claim}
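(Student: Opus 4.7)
The plan is to treat the three inequalities in turn, using only the defining sum $\mu*\nu(g)=\sum_{h}\mu(h)\nu(h^{-1}g)$ together with the fact that $\mu,\nu$ are probability measures and that right translation on $G$ (here the map $h\mapsto h^{-1}g$) preserves $\ell^{p}$ norms. Non-abelianness plays no role beyond this observation.

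For the first chain $\|\mu\|_\infty \le \|\mu\|_2 \le \|\mu\|_\infty^{1/2}$, I would argue directly from the definition. Since $\mu(g)^{2}\le\sum_{h}\mu(h)^{2}$ for every $g$, taking the maximum gives $\|\mu\|_\infty^{2}\le\|\mu\|_2^{2}$. For the other side, bound one factor of $\mu(g)$ by $\|\mu\|_\infty$ to get $\|\mu\|_2^{2}=\sum_g\mu(g)^{2}\le\|\mu\|_\infty\sum_g\mu(g)=\|\mu\|_\infty$, using that $\mu$ is a probability measure.

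For the second inequality $\|\mu*\nu\|_\infty\le\|\mu\|_2\|\nu\|_2$, I would fix $g\in G$ and apply Cauchy--Schwarz to $\mu*\nu(g)=\sum_{h}\mu(h)\nu(h^{-1}g)$, obtaining $\mu*\nu(g)\le\|\mu\|_2\bigl(\sum_{h}\nu(h^{-1}g)^{2}\bigr)^{1/2}$. The key step is the substitution $h'=h^{-1}g$, which runs over $G$ bijectively, so the inner sum equals $\|\nu\|_2^{2}$, giving the bound uniformly in $g$.

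For the third inequality $\|\mu*\nu\|_2\le\min\{\|\mu\|_2,\|\nu\|_2\}$, I would invoke Minkowski's inequality for the $\ell^{2}$ norm in the variable $g$, writing $\mu*\nu$ as a convex combination $\sum_{h}\mu(h)T_{h}\nu$ where $T_{h}\nu(g)=\nu(h^{-1}g)$. Since each $T_{h}\nu$ has the same $\ell^{2}$ norm as $\nu$ (by the same change-of-variables as above) and $\sum_{h}\mu(h)=1$, Minkowski gives $\|\mu*\nu\|_2\le\sum_{h}\mu(h)\|T_h\nu\|_2=\|\nu\|_2$. By the symmetric manipulation $\mu*\nu(g)=\sum_{k}\nu(k)\mu(gk^{-1})$ (substitute $k=h^{-1}g$), the same argument with the roles swapped yields $\|\mu*\nu\|_2\le\|\mu\|_2$, and the minimum bound follows. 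There is essentially no obstacle here; the only point requiring care, and the only place where the non-abelian nature enters, is verifying that the two substitutions $h'=h^{-1}g$ and $k=h^{-1}g$ are bijections of $G$ for each fixed $g$, which is immediate from the group axioms.
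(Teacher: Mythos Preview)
Your proof is correct. The paper does not actually give a proof of this claim; it simply states the three inequalities as ``elementary inequalities to be used'' and moves on, so your argument supplies what the paper omits. Each step checks out: the first chain uses only $0\le\mu(g)\le 1$ and $\sum_g\mu(g)=1$, the second is Cauchy--Schwarz plus the bijectivity of $h\mapsto h^{-1}g$, and the third is Minkowski applied to a convex combination of translates, with the symmetric rewriting $\mu*\nu(g)=\sum_k\nu(k)\mu(gk^{-1})$ handling the other half of the minimum.
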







Because of the second inequality, by passing to a subsequence of size at least $n/2$ when needed, instead of assuming $\|\mu_n*\dots*\mu_1\|_\infty\ge \rho$, we assume that 

\begin{equation}\label{eqn:passing:l2}
\|\mu_n*\dots*\mu_1\|_2^2 \ge \rho.
\end{equation}

For short, for $i<j$ we write 

$$\mu_{[i,j]}:= \mu_{j}* \dots * \mu_{i}.$$ 

\begin{claim}\label{claim:dyadic'} Let $0<\ep<1$ be given. There exist $i_0,l_0$ with $i_0+4l_0\le n$ and $l_0 \ge
n^{1-\ep/2}$ such that 

\begin{equation}\label{eqn:dyadic:l_2}
\|\mu_{[i_0,i_0+4l_0]}\|_2 \ge c \max_{i_0\le i\le i_0+7l_0/2} \|\mu_{[i,i+l_0/2]}\|_2,
\end{equation}
where $c$ is a sufficiently small constant depending on $\eps$.
\end{claim}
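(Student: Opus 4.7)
\textbf{Proof proposal for Claim~\ref{claim:dyadic'}.} The strategy is a top-down iterative dyadic zoom, powered by the competing bounds that probability convolutions satisfy $\|\cdot\|_2\le 1$ (Claim~\ref{claim:Young:discrete}) while the initial full convolution satisfies $\|\mu_{[1,n]}\|_2\ge\rho^{1/2}\ge n^{-A/2}$ (via \eqref{eqn:passing:l2} and the standing hypothesis $\rho\ge n^{-A}$ in Theorem~\ref{theorem:ILO:non-abelian}). The point is that each failure of the inequality at a scale $l_0$ produces a sub-interval of length $l_0/2$, i.e.\ a new scale eight times smaller, on which the $L^2$-norm has grown by a multiplicative factor of $c^{-1}$; since this norm cannot exceed $1$, only $O(\log n)$ such zooms are possible, and choosing $c$ small enough ensures this norm budget is exhausted before the scale budget driving $l_0$ below $n^{1-\eps/2}$.

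More precisely, initialize $(i_0^{(0)}, l_0^{(0)}) := (1, \lfloor n/4\rfloor)$, so that $V_0:=\|\mu_{[1,n]}\|_2\ge\rho^{1/2}$. Inductively, given $(i_0^{(k)}, l_0^{(k)})$ with $i_0^{(k)}+4l_0^{(k)}\le n$, $l_0^{(k)}\ge n^{1-\eps/2}$, and $V_k:=\|\mu_{[i_0^{(k)}, i_0^{(k)}+4l_0^{(k)}]}\|_2\ge c^{-k}\rho^{1/2}$, halt if the conclusion of the claim holds at this pair; otherwise select $i\in[i_0^{(k)}, i_0^{(k)}+7l_0^{(k)}/2]$ witnessing the failure, and set $(i_0^{(k+1)}, l_0^{(k+1)}):=(i,\lfloor l_0^{(k)}/8\rfloor)$. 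Then the new interval $[i_0^{(k+1)}, i_0^{(k+1)}+4l_0^{(k+1)}]$ is contained in $[i, i+l_0^{(k)}/2]\subset[i_0^{(k)}, i_0^{(k)}+4l_0^{(k)}]$, and $V_{k+1}\ge c^{-1}V_k\ge c^{-(k+1)}\rho^{1/2}$ by the monotonicity $\|\mu*\nu\|_2\le\|\mu\|_2$ of Claim~\ref{claim:Young:discrete}.

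Two inequalities now force termination. From $V_k\le 1$ one obtains $k\le (A/2)\log n / \log(1/c)$; on the other hand $l_0^{(k)}\ge n/(4\cdot 8^k)-O(1)$ is $\ge n^{1-\eps/2}$ for all $k$ up to $k_{\max}=\lfloor(\eps/6)\log_2 n\rfloor-O(1)$. Taking $c=c(\eps,A)$ small enough that $\log(1/c)>3A/\eps$ makes the norm budget strictly smaller than the scale budget for large $n$, so the iteration must halt at some $k<k_{\max}$, producing $(i_0,l_0)=(i_0^{(k)},l_0^{(k)})$ with all required properties. The only technicality is verifying that the floors $\lfloor l_0^{(k)}/8\rfloor$ do not spoil the size lower bound or the sub-interval containment, which is harmless because $l_0^{(k)}\gg 1$ throughout the iteration. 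I do not anticipate a genuine obstacle: the argument is purely a pigeon-hole on dyadic scales capped by the trivial $L^2$ bound, with no non-abelian input required.
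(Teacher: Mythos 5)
Your proposal is correct and is essentially the paper's own argument: the paper also runs the dyadic zoom (phrased as a proof by contradiction producing a nested sequence with $l_{j+1}=l_j/8$ and norms growing by $c^{-1}$ at each step), caps the number of steps at $O(\log_{1/c}n)$ using $\|\mu_{[1,n]}\|_2\ge n^{-O(1)}$ and $\|\cdot\|_2\le 1$, and concludes $l_k=\Omega(n^{1-\eps/2})$ by choosing $c$ small in terms of $\eps$ and $A$. Your version just makes the budget comparison and the role of the monotonicity $\|\mu*\nu\|_2\le\min\{\|\mu\|_2,\|\nu\|_2\}$ more explicit.
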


\begin{proof}(of Claim \ref{claim:dyadic'}) The proof is
  standard. Assume otherwise, then we can find a nested sequence $[1,n]  \supset [i_1,i_1+4l_1] \supset [i_2,4l_2] \supset \dots  \supset [i_k,i_k+4l_k] $ such that $l_{j+1} = l_j/8$ and that 
  
  $$\|\mu_{[i_j,i_j+4l_j]}\|_2 \le c  \|\mu_{[i_{j+1}, i_{j+1}+4l_{j+1}]}\|_2.$$
  
However, as $\|\mu_{[1,n]}\|_2 \ge n^{-O(1)}$ and $\|\mu_{[.]}\|_2 \le1$, the nested sequence above must have at most $k=O(\log_{1/c} n)$ terms. By definition 

$$l_k = \Omega(\frac{n}{8^{k}}) = \Omega(n^{1-\eps/2}).$$ 
\end{proof}

With $i_0,l_0$ from Claim \ref{claim:dyadic'} we have

$$\prod_{0\le j \le n^{\eps/2}/2}  \frac{ \|\mu_{[i_0+l_0- (j+1)n^{1-\eps}, i_0+2l_0]}\|_2}{\|\mu_{[i_0+l_0 - j n^{1-\eps}, i_0+2l_0]}\|_2} = \frac{\|\mu_{[i_0+l_0- (n^{\eps/2}/2+1)n^{1-\eps}, i_0+2l_0]}\|_2}{\|\mu_{[i_0+l_0, i_0+2l_0]}\|_2} \ge \frac{\|\mu_{[i_0,i_0+2l_0}\|_2}{\|\mu_{[i_0,i_0+l_0]}\|_2} \ge c.$$

Thus there exists $0\le j \le n^{\eps/2}/2$ such that 

\begin{equation}\label{eqn:dyadic:j}  
\frac{\|\mu_{[i_0+l_0- (j+1)n^{1-\eps}, i_0+2l_0}\|_2}{\|\mu_{[i_0+l_0 - j n^{1-\eps}, i_0+2l_0]}\|_2} \ge 1-n^{-\eps}.
\end{equation}

Set  $j_0:=i_0+l_0-j n^{1-\eps}$ and $l_0^\ast:= i_0+2l_0-j_0$. Then 

$$l_0^\ast \ge l_0 - n^{1-\eps/2}/2 \ge l_0/2.$$ 

Combine Claim \ref{claim:dyadic'} and \eqref{eqn:dyadic:j}, using the third monotonicity property from Claim \ref{claim:Young:discrete} we obtain

\vskip .2in

\begin{lemma}\label{lemma:dyadic} The exist $j_0,l_0^\ast$ with $l_0^\ast \ge n^{1-\eps}$ such that
\begin{equation}\label{eqn:discrete:1}
\|\mu_{[j_0-l_0^\ast,j_0+l_0^\ast]}\|_2 \ge c \max \Big\{\|\mu_{[j_0-l_0^\ast,j_0-1]}\|_2, \|\mu_{[j_0,j_0+l_0^\ast]}\|_2 \Big \}
\end{equation}
and
\begin{equation}\label{eqn:discrete:2}
|\mu_{[j_0-m,j_0+l_0^\ast]}\|_2 \ge (1-n^{-\eps}) \|\mu_{[j_0,j_0+l_0^\ast]}\|_2 \mbox{ for all } m\le n^{1-\eps}.  
\end{equation}
\end{lemma}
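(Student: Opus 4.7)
The lemma is essentially a consolidation of the telescoping-pigeonhole argument already carried out in the paragraphs preceding it; I would organize a clean proof as follows.

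First, take the $(i_0, l_0)$ supplied by Claim \ref{claim:dyadic'} and the index $j \in \{0, 1, \ldots, \lfloor n^{\eps/2}/2 \rfloor\}$ extracted from the pigeonhole step producing \eqref{eqn:dyadic:j}, and define $j_0 := i_0 + l_0 - j n^{1-\eps}$ and $l_0^\ast := i_0 + 2 l_0 - j_0 = l_0 + j n^{1-\eps}$. Since $j n^{1-\eps} \le n^{1-\eps/2}/2 \le l_0/2$, the lower bound $l_0^\ast \ge l_0 \ge n^{1-\eps/2} \ge n^{1-\eps}$ is immediate, and $j_0$ lies in the ``central'' range $[i_0 + l_0/2, i_0 + l_0]$, safely inside the interval $[i_0, i_0 + 7 l_0/2]$ on which Claim \ref{claim:dyadic'} provides its sub-convolution estimates.

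To establish \eqref{eqn:discrete:1}, my plan is to sandwich both sides by $\|\mu_{[i_0, i_0 + 4 l_0]}\|_2$. The two sub-norms on the right-hand side are each bounded above, via Young's inequality (item 3 of Claim \ref{claim:Young:discrete}), by the 2-norm of a shorter sub-convolution of length $l_0/2$ sitting inside $[i_0, i_0 + 4 l_0]$; Claim \ref{claim:dyadic'} then shows each such sub-norm is at most $c^{-1} \|\mu_{[i_0, i_0 + 4 l_0]}\|_2$. In the opposite direction, the left-hand side $\|\mu_{[j_0 - l_0^\ast, j_0 + l_0^\ast]}\|_2$ should be at least a constant multiple of $\|\mu_{[i_0, i_0 + 4 l_0]}\|_2$; this will follow by combining the telescoping estimate (whose full product of ratios is $\ge c$) with Young's monotonicity. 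Chaining these two bounds gives \eqref{eqn:discrete:1} with a possibly smaller constant.

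For \eqref{eqn:discrete:2}, the key observation is that $m \mapsto \|\mu_{[j_0 - m, j_0 + l_0^\ast]}\|_2$ is non-increasing in $m$ by Young's monotonicity. The pigeonhole estimate \eqref{eqn:dyadic:j} gives exactly the required bound at $m = n^{1-\eps}$, and monotonicity then propagates it to all smaller $m$.

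I expect the conceptual content to be entirely routine --- pigeonhole plus Young's inequality --- so the main obstacle will be bookkeeping: making sure all the relevant sub-intervals of the form $[j_0 - l_0/2, j_0]$, $[j_0, j_0 + l_0/2]$, and $[j_0 - l_0^\ast, j_0 + l_0^\ast]$ are positioned (or trimmed) so that Claim \ref{claim:dyadic'} applies to them, which uses crucially that $l_0 \ge n^{1-\eps/2}$ dominates the total shift $j n^{1-\eps}$, and keeping track of the various absolute constants as they degrade through each application of Young's inequality.
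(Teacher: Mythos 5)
Your overall route is the paper's own: Claim \ref{claim:dyadic'}, the telescoping/pigeonhole step \eqref{eqn:dyadic:j}, and the monotonicity $\|\mu*\nu\|_2\le\min\{\|\mu\|_2,\|\nu\|_2\}$. Your derivation of \eqref{eqn:discrete:2} (the bound at $m=n^{1-\eps}$ is exactly \eqref{eqn:dyadic:j}, and monotonicity propagates it to smaller $m$) and your upper bounds on the two terms on the right of \eqref{eqn:discrete:1} (each dominated by a length-$l_0/2$ sub-convolution with left endpoint in $[i_0,i_0+7l_0/2]$, hence by $c^{-1}\|\mu_{[i_0,i_0+4l_0]}\|_2$) are correct and match the paper.

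The one step that does not close as you describe it is the lower bound on the left-hand side of \eqref{eqn:discrete:1}. With $j_0=i_0+l_0-jn^{1-\eps}$ and $l_0^\ast=l_0+jn^{1-\eps}$ one has $j_0-l_0^\ast=i_0-2jn^{1-\eps}$, so $[j_0-l_0^\ast,j_0+l_0^\ast]$ protrudes to the \emph{left} of $i_0$ by $2jn^{1-\eps}$ (up to $n^{1-\eps/2}$) and is not contained in $[i_0,i_0+4l_0]$; hence monotonicity does not give $\|\mu_{[j_0-l_0^\ast,j_0+l_0^\ast]}\|_2\ge\|\mu_{[i_0,i_0+4l_0]}\|_2$. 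Nor does the telescoping product help: it only controls intervals $[i_0+l_0-j'n^{1-\eps},i_0+2l_0]$, whose left endpoints stay above $i_0+l_0/2$. Since Young's inequality bounds convolutions only from above, convolving with the extra block $\mu_{[i_0-2jn^{1-\eps},i_0-1]}$ could in principle destroy the $L^2$ norm, so the claimed sandwich fails for the interval as written. The repair is the trimming you parenthetically allowed for: replace the left endpoint by $i_0$, i.e. take $\nu:=\mu_{[i_0,j_0-1]}$, whose interval has length $j_0-i_0=l_0-jn^{1-\eps}\ge l_0/2$; then $[i_0,j_0+l_0^\ast]=[i_0,i_0+2l_0]\subseteq[i_0,i_0+4l_0]$ and the sandwich goes through, and everything downstream (Lemma \ref{lemma:BSzG} and \eqref{eqn:discrete:2}) uses only that $\nu$ is a sub-convolution of length $\gg n^{1-\eps}$ sitting immediately to the left of $j_0$. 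You should state explicitly that the lemma is to be read with this (asymmetric) interval; the paper's own one-line derivation has the same imprecision.
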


Note that although we vary $m$ in \eqref{eqn:discrete:2}, the inequality is clearly most meaningful at $m=n^{1-\eps}$. For the rest of this section, we will focus on \eqref{eqn:discrete:1}. For brevity, write 

$$\mu:= \mu_{[j_0,j_0+l_0^\ast]}, \mbox{ and } \nu:= \mu_{[j_0-l_0^\ast,j_0-1]}.$$ 

We can rewrite \eqref{eqn:discrete:1} to

\begin{equation}\label{eqn:assumption}
 c \max \Big \{\|\mu\|_2,\|\nu\|_2 \Big \} \le \|\mu*\nu\|_2 \le \min\Big \{\|\mu\|_2,\|\nu\|_2 \Big\}.
\end{equation}
  
To exploit this nice property, we will need an important notion of approximate group.

\begin{definition}\cite[Definition 1.2]{BGT}\label{def:approx} Let $K\ge 1$. A $K$-approximate group in a group $G$ is a multiplicative set $A$ with the following properties
\begin{itemize}
\item the set $A$ is symmetric: $id_G\in A$ and $a^{-1}\in A$ if $a\in A$;
\vskip .1in
\item there is a symmetric subset $X\subset A^3$ with $|X| \le K$ such that 
$$AA \subset XA.$$
\end{itemize}
\end{definition}

By using the asymmetric weighted Balog-Szemer\'edi-Gowers theorem we obtain the following analog of \cite[Proposition A.1]{BGGT}.

\begin{lemma}\label{lemma:BSzG}
Assume that $\mu$ and $\nu$ are probability measures such that 

$$\|\mu * \nu \|_2 \ge \frac{1}{K} \max\{\|\mu\|_2, \|\nu\|_2\}.$$

Then there is a $O(K^{O(1)})$-approximate subgroup $A$ of $G$ and $x_0,y_0\in G$ such that

$$|A| \ll K^{O(1)} (\max\{\|\mu\|_2, \|\nu\|_2\})^{-2}. $$

and 

$$\mu(x_0A), \nu(Ay_0) \gg K^{-O(1)}.$$
\end{lemma}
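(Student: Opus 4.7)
The plan is to imitate the strategy of \cite[Proposition A.1]{BGGT}: reduce the weighted hypothesis on $\mu, \nu$ to a combinatorial statement about level sets, apply an asymmetric non-commutative Balog--Szemer\'edi--Gowers (BSG) theorem, and finally invoke Tao's equivalence between finite sets of small tripling and approximate groups. Without loss of generality assume $M := \|\mu\|_2 \ge \|\nu\|_2 =: N$; the hypothesis then reads $\|\mu * \nu\|_2 \ge M/K$, and the desired bound on $|A|$ becomes $|A| \ll K^{O(1)} M^{-2}$.

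First I would dyadically pigeonhole on the values of $\mu$ and $\nu$: dropping levels that contribute less than a $K^{-C}$ fraction to $\|\mu\|_2^2$, to $\|\nu\|_2^2$, or to $\|\mu*\nu\|_2^2$ costs only $O(\log K)$ factors, which are absorbed into $K^{O(1)}$. This produces scales $\alpha \in [K^{-O(1)} M^2, M^2]$, $\beta \in [K^{-O(1)} N^2, N^2]$ and sets $U \subseteq \{g : \mu(g) \asymp \alpha\}$, $V \subseteq \{g : \nu(g) \asymp \beta\}$ with $\mu(U), \nu(V) \gg K^{-O(1)}$, $|U| \ll K^{O(1)} M^{-2}$, $|V| \ll K^{O(1)} N^{-2}$, and
$$\bigl\|(\alpha \mathbf{1}_U) * (\beta \mathbf{1}_V)\bigr\|_2^2 \gg K^{-O(1)} M^2.$$
Unwinding this via the identity $\|\mathbf{1}_U * \mathbf{1}_V\|_2^2 = E(U,V)$ shows that the multiplicative energy $E(U,V) = |\{(u,u',v,v') \in U^2 \times V^2 : uv = u'v'\}|$ exceeds $K^{-O(1)} |U|^{3/2} |V|^{3/2}$. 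The asymmetric non-commutative weighted BSG theorem of Tao \cite{T-product} then furnishes subsets $U_1 \subseteq U$, $V_1 \subseteq V$ with $|U_1| \gg K^{-O(1)} |U|$, $|V_1| \gg K^{-O(1)} |V|$, and
$$|U_1 \cdot V_1| \ll K^{O(1)} \sqrt{|U|\,|V|} \ll K^{O(1)} M^{-2}.$$
The non-commutative Pl\"unnecke--Ruzsa calculus in \cite{T-product} converts this single small product set into a small-tripling set $H \supseteq U_2 \cdot U_2^{-1}$ (for some $U_2 \subseteq U_1$ of comparable size) with $|H^3| \ll K^{O(1)} |H|$ and $|H| \ll K^{O(1)} M^{-2}$. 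Tao's theorem identifying small-tripling sets with approximate groups then yields an $O(K^{O(1)})$-approximate subgroup $A$ of $G$ with $|A| \ll K^{O(1)} M^{-2}$ such that $U_1$ is contained in $K^{O(1)}$ left translates of $A$ and $V_1$ in $K^{O(1)}$ right translates. A pigeonhole over those translates produces $x_0, y_0$ with $\mu(x_0 A) \ge \mu(U_1 \cap x_0 A) \gg K^{-O(1)}$ and $\nu(Ay_0) \gg K^{-O(1)}$, as required.

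The main obstacle I anticipate is the bookkeeping arising from the genuine asymmetry: since $\|\mu\|_2$ and $\|\nu\|_2$ need not be comparable, one must carry the four parameters $\alpha, \beta, |U|, |V|$ separately through the pigeonholing and the BSG application, in order to guarantee that the final control on $|A|$ is by the \emph{larger} of $\|\mu\|_2^{-2}$ and $\|\nu\|_2^{-2}$ and not by some mixed geometric mean. A related subtlety, handled by \cite{T-product}, is that in the non-commutative regime one cannot pass from a small product set $|U_1 V_1|$ to an approximate group directly via Pl\"unnecke; one must route through tripling, and each step costs polynomial $K^{O(1)}$ losses which need to be tracked to verify the stated $K^{O(1)}$-approximation exponent.
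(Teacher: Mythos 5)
Your proposal is correct and follows essentially the same route as the paper: both reduce to near-level sets of $\mu$ and $\nu$ carrying $\gg K^{-O(1)}$ of the mass (the paper via a two-sided truncation at heights $\|\mu\|_2^2/(100K^2)$ and $10K\|\mu\|_2^2$ rather than a dyadic pigeonhole), derive a multiplicative energy lower bound, and then invoke Tao's non-commutative Balog--Szemer\'edi--Gowers and product-set theorems from \cite{T-product} (the paper cites \cite[Theorem 5.2]{T-product} and \cite[Theorem 4.6]{T-product} directly, which package the tripling step you describe) before pigeonholing over the $K^{O(1)}$ translates. The asymmetry you worry about is harmless here since Young's inequality forces $\max\{\|\mu\|_2,\|\nu\|_2\}\le K\min\{\|\mu\|_2,\|\nu\|_2\}$.
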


In application, as by \eqref{eqn:assumption} we will set 

$$K:=c^{-1}.$$

\begin{proof}(of Lemma \ref{lemma:BSzG})
We apply the machinery from \cite[Proposition A.1]{BGGT} and \cite[Theorem 4.6]{T-product}. Set 

$$\delta:= \frac{1}{100K^2} \mbox{ and } M= 10K.$$

Define

$$\mu':= \mu 1_{\mu \ge M \|\mu\|_2^2}, \mu'':= \mu 1_{\mu \le \delta \|\mu\|_2^2}, \mbox{ and } \tilde{\mu}:= \mu - \mu'-\mu''.$$

We note that 

$$\sum_{g\in \supp(\mu)} \mu'(g) \le \sum_{g\in \supp(\mu)} \mu'(g) \frac{\mu'(g)}{M \|\mu\|_2^2} \le \frac{1}{10K}.$$

Furthermore,

$$ \sum_{g\in \supp(\mu)} \mu''(g)^2 =  \sum_{g\in \supp(\mu)} \mu(g)^2 1_{\mu \le  \delta \|\mu\|_2^2} \le \delta \|\mu\|_2^2.$$

As such, by Young's inequality,

$$\|\mu'*\nu\|_2 \le \min \Big \{\|\mu'\|_2 \|\nu\|_1,\|\mu'\|_1 \|\nu\|_2\Big  \} \le \frac{1}{10K} \|\nu\|_2 \le \frac{1}{10} \|\mu*\nu\|_2,$$

and 

$$\|\mu'' *\nu\|_2 \le \min\Big \{\|\mu''\|_2 \|\nu\|_1,\|\mu''\|_1 \|\nu\|_2 \Big \} \le \delta^{1/2} \|\mu\|_2 \le \frac{1}{10} \|\mu* \nu\|_2.$$

Thus by the triangle inequality, $\|\tilde{\mu}\|_2$ and $\|\tilde{\mu}*\nu\|_2$ are comparable to $\|\mu*\nu\|_2$,

$$ \|\mu\|_2 \ge \|\tilde{\mu}\|_2 \ge \|\tilde{\mu}*\nu\|_2 \ge \frac{4}{5}\|\mu*\nu \|_2 \ge \frac{4c}{5} \|\mu\|_2.$$

By doing similarly with $\nu$, we obtain

\begin{equation}\label{eqn:tilde:mu,nu}
\|\tilde{\mu}*\tilde{\nu}\|_2\ge \frac{1}{2K}\max \{\|\tilde{\mu}\|_2, \|\tilde{\nu}\|_2\}. 
\end{equation}

Setting $B_1:= \supp(\tilde{\mu}), B_2:= \supp(\tilde{\nu})$. Then by definition of $\tilde{\mu}$ and $\tilde{\nu}$

$$|B_1|,|B_2| \asymp_K \|\mu\|_2^{-2} \mbox{ and } \mu(B_1), \nu(B_2) \asymp_K 1.$$

Also, by \eqref{eqn:tilde:mu,nu} 

$$E(B_1,B_2)\gg_K |B_1|^3,$$

where the implicit constants depend polynomially on $K$, and where $E(B_1,B_2)$ is the multiplicative energy, 

$$E(B_1,B_2):=\# \Big \{(b_1,b_1',b_2,b_2')\in (B_1^2\times B_2^2): b_1b_2=b_1'b_2')\Big\}.$$ 

By Tao's result on product set estimates for non-commutative groups  \cite[Theorem 5.2]{T-product}, there exist subsets $B_1'\subset B_1,B_2'\subset B_2$ with $|B_i'|\gg \frac{|B_i|}{K^{O(1)}}$ and such that 

$$|B_1' B_2'|\le K^{O(1)} |B_1|.$$

Also by  \cite[Theorem 4.6]{T-product}, there exists a $O(K^{O(1)})$-approximate group $A$ of size $O(K^{O(1)} |B_1|)$ and a finite set $Y$ of cardinality $O(K^{O(1)})$ such that 

$$B_1' \subset YA \mbox{ and } B_2'\subset AY.$$

Thus there exists $x_0\in Y$ and $y_0\in Y$ such that

$$|B_1' \cap x_0A|, |B_2'\cap A y_0| \ge |B_1|/O(K^{O(1)}) \ge  \|\mu\|_2^{-2}/O(K^{O(1)}).$$

This completes the proof of Lemma \ref{lemma:BSzG}.
\end{proof}

Our next ingredient is a simplified version \footnote{This result holds in more general setting where  $G$ can be local, see \cite{BGT}.} of the mentioned celebrated result by Breuillard, Green, and Tao. 

\begin{theorem}\cite[Theorem 2.10]{BGT}\label{theorem:BGT} Let $A$ be a finite $K$-approximate group in a global group $G$. Then $A^4$ contains a coset nilprogression $HP$ of rank and step $O_K(1)$ and $|P|\gg_K |A|$. Furthermore, $P$ can be taken to be in $O_K(1)$-normal form. 
\end{theorem}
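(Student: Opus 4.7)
The result cited here is a deep structure theorem of Breuillard--Green--Tao, and an honest proof sketch must trace the main architecture of their argument rather than pretend there is a short route. The plan is to reduce, via a compactness/ultraproduct argument, the question about finite approximate groups to a question about locally compact groups, then apply the Gleason--Yamabe theorem to produce a Lie model, and finally extract nilpotent progression data from the Lie structure and push it back down.

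First, I would pass to the nonstandard setting: assuming by contradiction that the conclusion fails, take a sequence $A_n$ of $K$-approximate groups violating it (with a growing loss in the implicit constants) and form the ultraproduct $\mathbf{A} := \prod_{n\to\infty} A_n$. The powers $\mathbf{A}^k$ form a nested family of internal sets, and the ratios $|\mathbf{A}^{k+1}|/|\mathbf{A}^k|$ remain bounded in terms of $K$ (this is the standard Plünnecke--Ruzsa / tripling estimate for approximate groups). Using this, the construction of Hrushovski produces a locally compact topological group $L$ together with a continuous surjection $\pi$ from a neighborhood of the identity in $L$ onto a ``model'' of the approximate group, in such a way that $\pi^{-1}$ of small neighborhoods lies inside $\mathbf{A}^4$ and has positive Loeb density.

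Next I would apply the Gleason--Yamabe theorem: $L$ admits an open subgroup $L_0$ and a compact normal subgroup $N \subset L_0$ such that $L_0/N$ is a connected Lie group of finite dimension $d = O_K(1)$. Inside this Lie group I would build a nilpotent one-parameter subgroup structure by the escape-norm / commutator method of Breuillard--Green--Tao: the key geometric lemma is that for a set $U$ with bounded doubling in a Lie group, the Lie algebra elements whose one-parameter subgroups escape $U$ slowly generate a nilpotent Lie subalgebra of bounded step (this is where nilpotency of the Lie model comes from, via a ``no small subgroups'' plus Jordan-type argument). From this nilpotent Lie subalgebra one reads off a nilprogression $P$ of bounded rank and step sitting inside the model; pulling back through $\pi$ and through the compact kernel $N$ produces a coset nilprogression $HP$ inside $\mathbf{A}^4$ with $|P| \gg_K |\mathbf{A}|$.

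Finally, I would arrange the nilprogression to be in $C$-normal form: local properness and the upper-triangular commutator containments are obtained by choosing the generators $u_1,\dots,u_r$ of $P$ as a ``basis'' adapted to a central series of the nilpotent Lie algebra (reorder so that later $u_j$ lie deeper in the central series), and rescaling the lengths $N_j$ so that commutators have the required size bounds; the volume estimate then follows from local properness plus the doubling bound. Transferring this ultralimit conclusion back to the original sequence $A_n$ via \L{}o\'s's theorem contradicts the assumption, establishing the theorem. The hard part, and the genuinely new content of \cite{BGT}, is the middle step: producing a Lie model with controlled dimension and proving that the escape subalgebra is nilpotent of bounded step; everything else is either formal ultralimit manipulation or a basis-change in a nilpotent Lie algebra.
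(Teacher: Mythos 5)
This statement is quoted verbatim from \cite[Theorem 2.10]{BGT} and the paper offers no proof of it, so there is nothing internal to compare against; your sketch accurately reproduces the architecture of the Breuillard--Green--Tao argument (ultraproduct reduction, Hrushovski's Lie model, Gleason--Yamabe, the escape-norm/commutator argument for nilpotency of bounded step, extraction of a nilprogression in normal form, and transfer back via \L{}o\'s). As an outline of the cited proof it is correct, with the honest caveat you already make that the escape-norm nilpotency step is where the real work lies.
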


Combine Lemma \ref{lemma:BSzG} with $K=c^{-1}$ and Theorem
\ref{theorem:BGT}, after a covering argument (as $A^4 \subset
X^3 A$ for approximate group $A$), we obtain the following.

\begin{theorem}\label{theorem:BSzG:structure}
Assume as in Lemma \ref{lemma:BSzG}, then there exists a coset nilprogression $HP$ (in $O_c(1)$-normal form) with  $|HP| \ll_c  \|\mu\|_2^{-2}$ and there exist $x_0,y_0\in G$ such that

$$\mu(x_0HP), \nu(HPy_0) \gg_c 1.$$
\end{theorem}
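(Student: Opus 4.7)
The natural approach is to combine the outputs of Lemma~\ref{lemma:BSzG} and Theorem~\ref{theorem:BGT} by a covering argument. First, apply Lemma~\ref{lemma:BSzG} with $K = c^{-1}$ to obtain a $K^{O(1)}$-approximate group $A$ of size $|A| \ll_c \|\mu\|_2^{-2}$ together with elements $x_0,y_0\in G$ satisfying $\mu(x_0 A),\nu(A y_0) \gg_c 1$. Then feed $A$ into Theorem~\ref{theorem:BGT} to obtain a coset nilprogression $HP$ in $O_c(1)$-normal form of rank and step $O_c(1)$ with $HP \subset A^4$ and $|HP| \gg_c |A|$.

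The remaining task is to transfer the mass concentration from the approximate group $A$ onto a single translate of (a slight enlargement of) $HP$. Since $A$ is a $K^{O(1)}$-approximate group, iterating the approximate-group property $AA\subset XA$ yields $|A\cdot HP|\le |A^5|\ll_c |A|\asymp_c |HP|$. A standard noncommutative Ruzsa covering argument (greedily pack disjoint left translates $y_i HP$ into $A\cdot HP$ and use maximality, exactly as suggested by the author's parenthetical $A^4\subset X^3 A$) then produces a set $Y\subset G$ with $|Y|=O_c(1)$ such that $A\subset Y\cdot HP^{-1}\cdot HP = Y\cdot HP^2$, using symmetry of $HP$. By the growth bound \eqref{eqn:growth:P}, $|HP^2|=O_c(|HP|)$. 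Pigeonholing the decomposition $x_0 A\subset \bigcup_{y\in Y} x_0 y\cdot HP^2$ against $\mu(x_0A)\gg_c 1$ produces some $y_1\in Y$ with $\mu(x_0 y_1\cdot HP^2)\gg_c 1$; a symmetric covering on the right gives $y_1'\in Y'$ with $\nu(HP^2\cdot y_1' y_0)\gg_c 1$.

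It remains to replace $HP^2$ by a genuine coset nilprogression in $O_c(1)$-normal form of comparable size. Since $P$ is already in $C$-normal form with $C=O_c(1)$, the stability results of \cite{BGT} for $C$-normal form nilprogressions under bounded powers show that $P^2$ is contained in a dilated nilprogression $\tilde P := P(u_1,\ldots,u_r;O_c(N_1),\ldots,O_c(N_r))$, which is itself in $O_c(1)$-normal form and has $|\tilde P|=O_c(|P|)$. Since $H$ is normal inside the relevant subgroup, $HP^2\subset H\tilde P$. Renaming $H\tilde P$ back to $HP$ and absorbing $y_1,y_1'$ into the base points finishes the proof. The main obstacle is precisely this last step---keeping the slightly enlarged coset nilprogression in $O_c(1)$-normal form---but it is purely structural and follows from the BGT normalization machinery; everything else is Ruzsa covering plus pigeonhole.
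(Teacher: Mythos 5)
Your proposal is correct and follows exactly the route the paper indicates: the paper's entire proof is the one-line remark ``Combine Lemma \ref{lemma:BSzG} with $K=c^{-1}$ and Theorem \ref{theorem:BGT}, after a covering argument (as $A^4\subset X^3A$ for approximate group $A$)'', and your Ruzsa-covering-plus-pigeonhole expansion, together with absorbing $HP^2$ into a dilated coset nilprogression still in $O_c(1)$-normal form, is precisely the intended argument. No substantive difference.
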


In particularly, Theorem \ref{theorem:BSzG:structure}  holds for $\mu$ and $\nu$ defined after Lemma \ref{lemma:dyadic}.

For later steps, it will be more convenient to pass to a sub nilprogression $Q$ of $P$ which is slightly more ``proper". Let $D\le 1/\eps$ be a constant to be chosen sufficiently large depending on other parameters (such as rank, step, $C$-normal form) of the structure $P$ obtained in Theorem \ref{theorem:BSzG:structure}. Consider the nillprogression

\begin{equation}\label{eqn:nil:Q}
Q:=P_{1/CD^2} = P(u_1,\dots,u_r; M_1,\dots,M_r) \mbox{ with } M_i=\frac{1}{CD^2}N_i.
\end{equation}

By definition, the following holds for $Q$

\begin{enumerate}[(i)]
\item for every $1\le i< j\le r$, 

$$[u_i^{\pm 1}, u_j^{\pm 1}] \in  P(u_{j+1},\dots, u_r; 
\frac{M_{j+1}}{D^2M_iM_j}, \dots,\frac{M_r}{D^2M_iM_j});$$

\vskip .1in
\item the expressions $u_1^{k_1} \cdots u_r^{k_r}$ are distinct for all $k_1,\dots,k_r$ with 

$$|k_i|\le D M_i;$$

\vskip .1in
\item 
$$|HQ| \asymp_c |HP|.$$
\end{enumerate}

We show that if $x\in HQ$ and $x^2\in HQ$ then $x$ is asymptotically an element of $HQ_{1/2}$. 

\begin{claim}\label{claim:proper} The following holds  with $D$ sufficiently large

\begin{enumerate}[(1)]
\item Assume that $x$ is an element of $Q$ where each $u_i$ and $u_i^{-1}$ appears with frequencies $n_i,n_i'$ respectively. Then $u$ can be written as $x=u_1^{m_1-m_1'}\dots u_n^{m_r-m_r'}$ with 

$$ \max\{|m_i-n_i|, |m_i'-n_i'|\} \le \frac{M_i}{D} \mbox{ for all } i.$$

\item Assume that $x=u_1^{n_1}\dots u_n^{n_r}\in Q$, then $x^2 =u_1^{m_1}\dots u_r^{m_2}$ with  

$$ |m_i -2n_i|\le \frac{M_i}{D}.$$

\item Assume that $x\in HQ$ and such that $x^2\in HQ$, then 

$$x\in HQ_{(1+\frac{1}{D})}.$$

\end{enumerate}

\end{claim}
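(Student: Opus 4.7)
My approach for all three parts is the classical commutator collection process, specialized to the rescaled nilprogression $Q$ from \eqref{eqn:nil:Q}. Given any word in the generators $u_1^{\pm 1},\ldots,u_r^{\pm 1}$, I repeatedly apply the identity $u_j^{\pm 1} u_i^{\pm 1} = u_i^{\pm 1} u_j^{\pm 1}\cdot c_{i,j}$ for $i<j$, where by property (i) of $Q$ the correction $c_{i,j}$ lies in the sub-progression $P(u_{j+1},\ldots,u_r;\,M_{j+1}/(D^2 M_i M_j),\ldots,M_r/(D^2 M_i M_j))$. Sweeping the $u_1$'s to the far left first, then the $u_2$'s, and so on, yields a canonical expression $u_1^{m_1-m_1'}u_2^{m_2-m_2'}\cdots u_r^{m_r-m_r'}$. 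For part (1), the number of swaps of the form $(u_j,u_i^{\pm 1})$ encountered in this process is bounded by the product of the current frequencies of $u_i^{\pm 1}$ and $u_j^{\pm 1}$; since $x\in Q$, this product is at most $O(M_i M_j)$. Each such swap contributes at most $M_k/(D^2 M_i M_j)$ occurrences of $u_k^{\pm 1}$ (for $k>j$), so the total displacement of the $u_k$-count is at most $O(M_k/D^2)$ per pair $(i,j)$, and $O(r^2 M_k/D^2)$ in aggregate. Tracking positive and negative commutator occurrences separately then yields $\max\{|m_i-n_i|,|m_i'-n_i'|\}\le M_i/D$ once $D$ is chosen large enough in terms of $r$ and the normal-form constant $C$.

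For part (2), I apply the same collection to the concatenated word $(u_1^{n_1}\cdots u_r^{n_r})(u_1^{n_1}\cdots u_r^{n_r})$; the initial generator frequencies are now at most $2M_i$, but the preceding bookkeeping degrades only by a universal multiplicative constant, so $|m_i-2n_i|\le M_i/D$ still holds for $D$ taken large. For part (3), I reduce modulo the normal subgroup $H$ and work inside $G_0/H$. Applying (1) to a word representative of $x$ gives $x\equiv u_1^{e_1}\cdots u_r^{e_r}\pmod{H}$ with $|e_i|\le M_i(1+1/D)$, and squaring via (2) produces $x^2\equiv u_1^{f_1}\cdots u_r^{f_r}\pmod{H}$ with $|f_i-2e_i|\le M_i/D$. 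On the other hand, since $x^2\in HQ$, a fresh application of (1) gives $x^2\equiv u_1^{g_1}\cdots u_r^{g_r}\pmod{H}$ with $|g_i|\le M_i(1+1/D)$. All exponents involved lie comfortably inside the local-properness window $|\cdot|\le D M_i$, so property (ii) of $Q$ forces the two canonical forms to coincide: $f_i=g_i$. Therefore $|2e_i|\le |g_i|+M_i/D\le M_i(1+2/D)$, so $|e_i|\le (M_i/2)(1+2/D)$, placing $x$ in $HQ$ with lengths contracted by an asymptotic factor of $1/2$, which is the content of the informal assertion $x\in HQ_{1/2}$ preceding the statement.

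The main technical hurdle is the accounting in part (1): a priori, commutator corrections created at one stage of the collection must themselves be re-collected at later stages, which could in principle cascade and blow up the counts at the higher-indexed generators. What rescues the argument is the \emph{quadratic} smallness factor $1/D^2$ built into property (i) of $Q$ (as opposed to the mere $1/D$ of the original $P$), together with the fact that $r=O_K(1)$ is a fixed constant inherited from Theorem \ref{theorem:BGT}. These combine to bound the total commutator error in each $u_k$-count by $O(r^2/D^2)\cdot M_k$, which is comfortably below $M_k/D$ once $D$ is fixed as a sufficiently large constant depending on $r$ and $C$. This is precisely why the rescaling $Q=P_{1/CD^2}$ in \eqref{eqn:nil:Q} was taken with the quadratic factor $1/D^2$: it supplies exactly the slack needed for the collection estimates to close.
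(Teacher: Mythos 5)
Your proposal is correct and follows essentially the same route as the paper: the commutator collection process for (1) and (2), with the $1/D^2$ factor from the rescaling $Q=P_{1/CD^2}$ and the bound $r=O(1)$ absorbing the cascading corrections, and for (3) the combination of (1), (2) and local properness in the quotient $G_0/H$ to force the exponents of $x$ down to roughly $M_i/2$. Your bookkeeping in (1) (counting swaps per pair $(i,j)$ rather than per left-sweep of each generator) and your explicit invocation of local properness to equate the two canonical forms of $x^2$ in (3) are only cosmetic variations on the paper's argument.
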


We insert here a proof for completion.

\begin{proof}(of Claim \ref{claim:proper}) We will prove the first assertion, the second one follows similarly. Assume that in the representation of $x$ there are exactly $n_i^{(0)}=n_i$ and $n_i'^{(0)}=n_i'$ copies of $u_i$ and $u_i^{-1}$ respectively for $1\le i\le r$. We will move all copies of $u_1$ and $u_1^{-1}$ to the left. By (i), each step of replacing of $u_iu_1$ by $u_1u_i [u_i,u_1]$ would change the multiplicities $n_j^{(0)}$ of $u_j$ to $n_j^{(1)}$ where $ i+1\le j\le n$ and

$$|n_j^{(1)} - n_j^{(0)}| \le \frac{M_j}{D^2M_iM_1}.$$

Thus, after moving the first copy of $u_1$ all the way to the left after some $k_1\le 2(M_1+\dots+M_r)$ replacements, one has

\begin{equation}\label{eqn:nj}
|n_j^{(k_1)}- n_j^{(0)}| \le \frac{M_j}{D^2M_1}\sum_{i\le j-1} \frac{s_i^{(0)}}{M_i} \le  \frac{rM_j}{D^2 M_1},
\end{equation}

where we used the fact that $s_i^{(0)}$, the number of times $u_1$ meets $u_i$, is bounded by $s_i^{(0)} \le n_i^{(0)}\le M_i$. As a consequence, after $n_i$ steps of moving all $u_1$ to the left, one has \footnote{Strictly speaking, the bounds of $s_i^{(0)}$ from \eqref{eqn:nj} will increase after each round of moving a copy $u_1$ all the way to the left, but this change is negligible.}

\begin{equation}\label{eqn:nj'}
|n_j^{(k_1+\dots+k_{n_1})}- n_j^{(0)}| \le  n_j^{(0)} \cdot O(\frac{rM_j}{D^2 M_1}) =O(\frac{rM_j}{D^2}).
\end{equation}

Hence if we write  $x=u_1^{n_1-n_1'}y$, then in $y$ the $u_j, 2\le j\le r$ appears with total frequency $m_j$ where

$$n_j-O(rM_i/D^2) \le m_j \le n_j+O(rM_i/D^2).$$ 

We apply the collecting process again for $y$. The process terminates after $2r$ iterations, and at the end we obtain the desired bounds assuming $D$ to be large compared to $r$.

Now we  show the third claim for the case of nilprogression. We write $x$ in the form $u_1^{n_1}\dots u_n^{n_r}$ with $|n_i| \le (1+1/D)M_i$ as in (1). By the second assertion, 

$$x^2=u_1^{m_1}\dots u_r^{m_r}$$

with $|m_i - 2n_i| \le  M_i/D$.

However, as the elements $u_1^{k_1} \dots u_r^{k_r}$ are distinct for all $|k_1|\le DM_1,\dots,|k_r|\le DM_r$, and as $x^2\in Q$, by (1) we must have $|m_i|\le (1+1/D)M_i$, and so

$$2|n_i|-M_i/D \le (1+1/D)M_i.$$

Thus

$$|n_i| \le \frac{1}{2}(1+\frac{1}{D})M_i.$$

For the coset nilprogression case, note that if $x \in HQ$ and $x^2 \in HQ$ then $\pi(x)\in Q$ and $\pi^2(x)=\pi(x^2) \in Q$. We then argue as above for $\pi(x)$. 
\end{proof}

By using covering arguments, one sees that Theorem \ref{theorem:BSzG:structure} remains valid when $P$ is replaced by $Q$ (although with slightly worse constants). Without loss of generality we will assume our nilprogression $P$ to satisfy Claim \ref{claim:proper} from now on.

\section{Proof of Theorem \ref{theorem:ILO:non-abelian}: second step}\label{section:ILO:2}

We next continue our proof of Theorem \ref{theorem:ILO:non-abelian} by exploiting Theorem \ref{theorem:BSzG:structure} and equation \eqref{eqn:discrete:2} from Lemma \ref{lemma:dyadic}. Our main result of this section, Lemma \ref{lemma:HP,S:norm}, is obtained by following the approach of \cite{Tao-LO}. 

Set

$$n_0:=n^{\eps}.$$

For a given coset nilprogression $HP(x_1,\dots,x_r;N_1,\dots, N_r)$, and for $g \in \langle HP \rangle $, we define the norm of $g$ with respect to $HP$ to be

$$\|g\|_{HP}:=\inf \Big \{\lambda: g\in HP(x_1,\dots,x_r;\lambda N_1,\dots, \lambda N_r) \Big \}.$$

For short, we will denote  $HP(x_1,\dots,x_r;\lambda
N_1,\dots, \lambda N_r )$ by $HP_\lambda$. Note that in the special case that $\|g\|_{HP} <1/\max\{N_1,\dots,N_r\}$ then $g\in H$.

Next, for $g\in X \langle HP \rangle X^{-1}$ we also define the norm of $g$ with respect to $X$ and $HP$ as 

\begin{equation}\label{eqn:norm:S,HP}
\|g\|_{HP,X}:=\inf \Big \{ \lambda:  \exists \sigma\in Sym(X) \mbox{ so that } \forall x\in X,  g \in x HP_\lambda (\sigma(x))^{-1} \Big \}.
\end{equation}

Again, in the special case that $\|g\|_{HP,X} <1/\max\{N_1,\dots,N_r\}$ then there
exists $\sigma\in Sym(X)$ so that  for all $x\in X,  g \in x H(\sigma(x))^{-1}$.

Recall from the second property \eqref{eqn:discrete:2} of Lemma \ref{lemma:dyadic} that 

$$\|\mu*\eta_m\|_2 \ge (1-1/n_0)\|\mu\|_2$$ 

with  $\mu= \mu_{[j_0,j_0+l_0^\ast]}$ and $\eta_m= \mu_{[j_0-m, j_0-1]}$ for any $m\le n^{1-\eps}$. 

This  can be rewritten as

$$\int_G \int_G \|\mu*\delta_g  -\mu*\delta_h\|_2^2 d\eta_m(g) d \eta_m(h)  = 2 ( \|\mu\|_2^2 - \|\mu*\eta_m\|_2^2 )\le \frac{4}{n_0}\|\mu\|_2^2.$$



Motivated by this, we call a pair $(g,h)\in G^2$ in $\supp(\eta_m)\times \supp(\eta_m)$ {\it typical} if  

\begin{equation}\label{eqn:def:typical}
\|\mu*\delta_g  -\mu*\delta_h\|_2 \le \frac{1}{n_0^{1/2-\eps/2}}\|\mu\|_2.
\end{equation}

Note that $\eta_m$ has discrete support. Let $T_{\eta_m}$ denote the set of typical pairs.

\begin{claim}\label{claim:typpair:discrete} For $\eta_m$-asymptotically almost surely, any pair $(g,h)\in T_{\eta_m}$ is typical. More precisely,

$$\sum_{(g,h) \notin T_{\eta_m}} \eta_m(g) \eta_m(h) \le \frac{4}{n_0^\eps}.$$
 \end{claim}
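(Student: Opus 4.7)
The plan is to derive this as a direct Markov inequality consequence of the displayed integral bound that immediately precedes the claim. Writing $X(g,h) := \|\mu*\delta_g - \mu*\delta_h\|_2^2 / \|\mu\|_2^2$, the preceding inequality asserts
$$\int_G\!\int_G X(g,h)\, d\eta_m(g)\, d\eta_m(h) \le \frac{4}{n_0}.$$
A pair fails to be typical exactly when $X(g,h) > n_0^{-(1-\eps)}$, so Markov's inequality applied to the non-negative random variable $X$ under the product measure $\eta_m \otimes \eta_m$ gives
$$\sum_{(g,h)\notin T_{\eta_m}} \eta_m(g)\eta_m(h) \le \frac{4/n_0}{1/n_0^{1-\eps}} = \frac{4}{n_0^\eps},$$
which is the desired bound.

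For completeness I would first confirm the integral identity stated in the text, namely
$$\int_G\!\int_G \|\mu*\delta_g - \mu*\delta_h\|_2^2 \, d\eta_m(g)\, d\eta_m(h) = 2\bigl(\|\mu\|_2^2 - \|\mu*\eta_m\|_2^2\bigr).$$
Expanding the square as $\|\mu*\delta_g\|_2^2 + \|\mu*\delta_h\|_2^2 - 2\langle \mu*\delta_g,\mu*\delta_h\rangle$, using the right-translation invariance $\|\mu*\delta_g\|_2 = \|\mu\|_2$, and observing that $\int\!\int \langle \mu*\delta_g,\mu*\delta_h\rangle\, d\eta_m(g)\, d\eta_m(h) = \|\mu*\eta_m\|_2^2$, gives the identity. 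Combined with \eqref{eqn:discrete:2} of Lemma \ref{lemma:dyadic}, which yields $\|\mu*\eta_m\|_2^2 \ge (1-1/n_0)^2 \|\mu\|_2^2 \ge (1 - 2/n_0)\|\mu\|_2^2$, we obtain the $4/n_0$ bound on the double integral used above.

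There is no real obstacle here: the entire statement is a one-line Markov estimate once the $L^2$ identity is unpacked. The only minor point to be careful about is the exponent bookkeeping, i.e.\ matching the threshold $n_0^{-1/2+\eps/2}$ in the definition \eqref{eqn:def:typical} with the mean bound $4/n_0$ so that the ratio comes out as $4/n_0^\eps$ rather than some off-by-a-power quantity.
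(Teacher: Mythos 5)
Your proof is correct and is essentially the paper's own argument: the claim is exactly the Markov/Chebyshev estimate applied to $\|\mu*\delta_g-\mu*\delta_h\|_2^2$ under $\eta_m\otimes\eta_m$, with threshold $n_0^{-(1-\eps)}\|\mu\|_2^2$ coming from the definition of a typical pair. Your additional verification of the preceding $L^2$ identity and of the $4/n_0$ bound via \eqref{eqn:discrete:2} is a correct unpacking of what the paper states without detail.
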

 
\begin{proof}(of Claim \ref{claim:typpair:discrete})
By definition, 

$$\frac{1}{n_0^{1-\eps}}\|\mu\|_2^2 \sum_{(g,h) \notin T_{\eta_m}} \eta_m(g) \eta_m(h) \le \sum_{(g,h)\notin T_{\eta_m}} \|\mu*\delta_g  -\mu*\delta_h\|_2^2 \eta_m(g) \eta_m(h)  \le \frac{4}{n_0}\|\mu\|_2^2.$$

Thus 

$$\sum_{(g,h) \notin T_{\eta_m}} \eta_m(g) \eta_m(h) \le \frac{4}{n_0^\eps}.$$
\end{proof}

We next consider a typical pair $(g,h) \in T_{\eta_m}$. Notice that we can write $\mu*\delta_g(x) = \mu(xg^{-1})$ and $\mu*\delta_h(x) = \mu(xh^{-1})$. Thus, with $k=hg^{-1}$, by definition

\begin{equation}\label{eqn:typical'}
\sum_{x\in G} (\mu(x) - \mu (xk))^2 =\sum_{x\in G} (\mu(xg^{-1}) - \mu (xh^{-1}))^2 \le  \frac{1}{n_0^{1-\eps}} \sum_{x\in G} {\mu}^2(x).
\end{equation}

Thus it is natural to introduce the ``distance" with respect to $\mu$:

$$ d_{\mu}(g,h):= \sqrt{\frac{\sum_{x\in G} (\mu(xg^{-1}) - \mu (xh^{-1}))^2}{\sum_{x\in G} {\mu}^2(x)}}.$$

Thus $(g,h)$ is typical iff 

\begin{equation}\label{eqn:def:typical'}
d_\mu(g,h) \le \frac{1}{n_0^{1/2-\eps/2}}.
\end{equation}

Using definition, we can show the following elementary properties about $d_\mu$.

\begin{fact}\label{fact:dmu} For every $k$ we have $d_\mu(k, id_G)= d_\mu(k^{-1},id_G)$. Furthermore  $d_\mu$ is right-invariant, symmetric, and satisfies the triangle inequality.
\end{fact}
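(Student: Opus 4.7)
The proof is a routine verification directly from the definition, and the plan is simply to handle each of the four assertions in turn. As a notational device I would set $f_g(x):=\mu(xg^{-1})$, viewed as an $\ell^2(G)$ function (it has finite support since $\mu$ does); then $d_\mu(g,h)=\|f_g-f_h\|_2/\|\mu\|_2$, which recasts $d_\mu$ as a normalized $\ell^2$ distance and makes the underlying geometric content transparent. Two trivial ingredients drive the whole argument: that a squared difference is symmetric in its arguments, and that $y=xk^{-1}$ is a bijection of $G$ for every fixed $k$.

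Symmetry $d_\mu(g,h)=d_\mu(h,g)$ is immediate from $(a-b)^2=(b-a)^2$. Right-invariance $d_\mu(gk,hk)=d_\mu(g,h)$ follows from the identity $f_{gk}(x)=\mu(xk^{-1}g^{-1})=f_g(xk^{-1})$ together with the change of variable $y=xk^{-1}$, which gives $\sum_x(f_g(xk^{-1})-f_h(xk^{-1}))^2=\sum_y(f_g(y)-f_h(y))^2$. Once right-invariance is in hand, the identity $d_\mu(k,id_G)=d_\mu(k^{-1},id_G)$ is free: right-translating by $k^{-1}$ gives $d_\mu(k,id_G)=d_\mu(id_G,k^{-1})$, and a further application of symmetry rewrites the right-hand side as $d_\mu(k^{-1},id_G)$.

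For the triangle inequality $d_\mu(g,h)\le d_\mu(g,k)+d_\mu(k,h)$, I would invoke Minkowski's inequality in $\ell^2(G)$ applied to the decomposition $f_g-f_h=(f_g-f_k)+(f_k-f_h)$; since the same normalizing factor $\|\mu\|_2$ appears in the denominator of all three $d_\mu$'s, the claim follows upon dividing through. No step presents any real obstacle: everything reduces to the bijectivity of right-translation on $G$ and the $\ell^2$ triangle inequality, and the only point deserving mild attention is that the denominator $\sum_x\mu(x)^2$ is independent of $g,h,k$, so it plays no role in any of the above manipulations.
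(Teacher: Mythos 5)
Your verification is correct and is exactly the elementary definitional argument the paper has in mind (the paper omits the proof, asserting only that it follows "using definition"). Recasting $d_\mu$ as the normalized $\ell^2$ distance between the right-translates $f_g(x)=\mu(xg^{-1})$, and then using the bijectivity of right multiplication for invariance, Minkowski for the triangle inequality, and right-invariance plus symmetry to deduce $d_\mu(k,id_G)=d_\mu(k^{-1},id_G)$, is precisely the intended route; nothing is missing.
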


For the remaining part of this section we will continue to understand further properties of $d_\mu$ given the structure of $\supp(\mu)$ obtained from Theorem \ref{theorem:BSzG:structure}. As $\mu$ is fixed, allow us to drop the subscript $\mu$ in $d_\mu(.)$ for convenience.  We first show that the set of $k$ of small distance to $id_G$ can be covered efficiently.

\begin{claim}\label{claim:k:covering:discrete} For $\delta$ sufficiently small depending on $c$ (from Lemma  \ref{lemma:dyadic}),  there exists a collection of $O(c^{-O(1)})$-left translations of $HP^2$ which contains all $k$ with $d(k,id_G)\le \delta$.
\end{claim}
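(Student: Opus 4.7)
The plan is to translate the condition $d(k,id_G)\le\delta$ into $\ell^2$-closeness between $\mu$ and its right-shift, then use Theorem~\ref{theorem:BSzG:structure} to force all admissible $k$'s to share a common coset structure via a ``witness'' $y\in\supp(\mu)$, and finally bound the number of cosets needed by an energy estimate combined with the non-commutative Plünnecke--Ruzsa machinery. Write $B:=x_0 HP$, $c_0:=\mu(B)\gg_c 1$, $|B|\ll_c\|\mu\|_2^{-2}$, and $\mu_k(y):=\mu(yk)=(\mu*\delta_{k^{-1}})(y)$. Unpacking $d_\mu$ directly gives
\[
d(k,id_G)\le\delta\;\Longleftrightarrow\;\|\mu-\mu_k\|_2\le\delta\|\mu\|_2.
\]
Cauchy--Schwarz applied to $1_B$ and $\mu-\mu_k$ then yields $|\mu(B)-\mu(Bk)|\le|B|^{1/2}\delta\|\mu\|_2\ll_c\delta$, so choosing $\delta$ polynomially small in $c$ gives $\mu(Bk)\ge c_0/2$ for every $k\in\Omega_\delta:=\{k:d(k,id_G)\le\delta\}$. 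In particular $Bk\cap\supp(\mu)$ has $\mu$-mass at least $c_0/2$ and is nonempty.

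Next, for $k_1,k_2\in\Omega_\delta$ sharing a common witness $y\in Bk_1\cap Bk_2\cap\supp(\mu)$, writing $y=x_0h_ip_ik_i$ and using that $H$ is normal in $\langle HP\rangle$ (so $PH=HP$) and $P$ is symmetric, a direct computation gives
\[
k_1k_2^{-1}=p_1^{-1}h_1^{-1}h_2p_2\in(HP)^{-1}(HP)=HP^2,
\]
so $k_1$ and $k_2$ lie in a common right-coset of $HP^2$. The identity $\sum_k\mu(Bk)=|B|$ combined with the previous bound gives $|\Omega_\delta|\le 2|B|/c_0\ll_c|HP^2|$. Letting $e(y):=|B^{-1}y\cap\Omega_\delta|$, the inequality $\sum_y\mu(y)e(y)=\sum_{k\in\Omega_\delta}\mu(Bk)\ge(c_0/2)|\Omega_\delta|$ together with Cauchy--Schwarz upgrades to the energy estimate
\[
\#\{(k_1,k_2)\in\Omega_\delta^2:k_1k_2^{-1}\in HP^2\}\ge (c_0^2/4)|\Omega_\delta|^2.
\]

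To convert this into a bounded covering, I combine the energy bound with the approximate-group property of $\Omega_\delta$: the right-invariance and triangle inequality of $d$ give $\Omega_\delta\Omega_\delta^{-1}\subset\Omega_{2\delta}$, and the argument of the previous step also gives $|\Omega_{2\delta}|\ll_c|\Omega_\delta|$, so $\Omega_\delta$ has doubling $O_c(1)$. Feeding these two inputs (high energy against $HP^2$ together with controlled doubling) into the non-commutative Balog--Szemerédi--Gowers / Ruzsa-covering machinery of \cite{T-product} --- the very same tools already used in the proof of Lemma~\ref{lemma:BSzG} --- produces $\Omega_\delta\subset\bigcup_{i=1}^{N}HP^{O(1)}g_i$ with $N\ll_c 1$. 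The polynomial growth bound \eqref{eqn:growth:P} then absorbs $HP^{O(1)}$ into $O_c(1)$ additional translates of $HP^2$, and the symmetries $\Omega_\delta=\Omega_\delta^{-1}$ and $(HP^2)^{-1}=HP^2$ convert right-translates into left-translates.

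The main obstacle is the final covering step. The first-moment energy estimate, together with a naive greedy covering that gains a constant fraction per step, only yields $O_c(\log|HP|)$ translates of $HP^2$ (and $|HP|$ can grow like $\|\mu\|_2^{-2}$). Obtaining the claimed $O(c^{-O(1)})$ bound therefore requires the full non-commutative BSzG/Plünnecke--Ruzsa toolkit to pass from the energy bound to a genuine small-doubling statement $|\Omega_\delta\cdot HP^2|\ll_c|HP^2|$ (on a large subset of $\Omega_\delta$), together with a careful application of the volume bound \eqref{eqn:growth:P} to reabsorb the auxiliary $HP^{O(1)}$ factors back into $HP^2$.
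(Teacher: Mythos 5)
Your reduction of $d(k,id_G)\le\delta$ to the mass lower bound $\mu(Bk)\ge c_0/2$ (via Cauchy--Schwarz against $\oindicator{B}$ and $|B|^{1/2}\ll_c\|\mu\|_2^{-1}$) is correct and is a first-moment variant of what the paper does with $\sum_{x\in x_0HP}\mu^2(xk^{-1})$; likewise your observation that $Bk_1\cap Bk_2\ne\emptyset$ forces $k_1k_2^{-1}\in HP^2$ is exactly the right structural input. The genuine gap is the final covering step, where you declare that only the ``full non-commutative BSzG/Pl\"unnecke--Ruzsa toolkit'' can get from here to $O(c^{-O(1)})$ translates, and even then you only claim control on a large subset of $\Omega_\delta:=\{k:d(k,id_G)\le\delta\}$ --- whereas the claim requires covering \emph{all} such $k$. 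A Balog--Szemer\'edi--Gowers-type step inherently loses a positive proportion of the set, so as written your argument does not prove the statement.

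The missing idea is much more elementary: a maximal-disjoint-subfamily (Ruzsa/Vitali) covering, which is what the paper uses. Choose $k_1,k_2,\dots\in\Omega_\delta$ greedily so that the translates $Bk_1,Bk_2,\dots$ are pairwise disjoint (equivalently, each new $k_j$ avoids $\bigcup_{i<j}HP^2k_i$, by your own witness computation). Each disjoint translate carries $\mu$-mass at least $c_0/2$, and the total mass is $1$, so the process stops after at most $N\le 2/c_0=O(c^{-O(1)})$ steps; by maximality every $k\in\Omega_\delta$ then satisfies $Bk\cap Bk_i\ne\emptyset$ for some $i$, hence $k\in HP^2k_i$ (the paper runs the symmetric version and lands in $k_iHP^2$). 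Your stated obstruction --- that a greedy covering ``gains a constant fraction per step'' and so needs $O_c(\log|HP|)$ steps --- is a misdiagnosis: the quantity being depleted here is probability mass bounded by $1$, of which each step removes an absolute amount $c_0/2$, not a constant fraction of a set of size $|HP|$. With this observation your proof closes without any energy estimate or product-set machinery; the energy bound and the appeal to \cite{T-product} can be deleted.
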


\begin{proof}(of Claim \ref{claim:k:covering:discrete}) Let $x_0HP$ be the coset nilprogression obtained from Theorem \ref{theorem:BSzG:structure}. By assumption $d(k^{-1}, id_G)=d(k, id_G)\le \delta$,

\begin{equation}\label{eqn:k}
\sum_{x\in x_0HP} (\mu(x) - \mu (xk^{-1}))^2\le \sum_{x\in G} (\mu(x) - \mu (xk^{-1}))^2 \le \delta^2 \|\mu\|_2^2.
\end{equation}

As $(\mu(x)-\mu(xk^{-1}))^2 \ge \frac{1}{2}\mu^2 (x) - \mu^2(xk^{-1})$, it follows that 

$$
\sum_{x\in x_0HP} {\mu}^2(xk^{-1}) \ge \frac{1}{2} \sum_{x\in x_0HP} {\mu}^2(x)  -\delta^2 \|\mu\|_2^2.
$$

Thus if we choose $\delta \le \delta_0$ with sufficiently small $\delta_0$ depending on $c$, 

\begin{equation}\label{eqn:k:1}
\sum_{x\in x_0HP} {\mu}^2(xk^{-1}) \ge K^{-O(1)} \|\mu\|_2^2.
\end{equation}

We now consider a maximal collection of {\it disjoint} left translations of the form 

\begin{equation}\label{eqn:collection}
\Big \{k_i HP, 0\le i\le N \Big \}, \mbox{ where } k_0=id_G \mbox{ and } d(k_i, id_G) \le \delta, i\ge 1.
\end{equation}

By disjointness (and as $P=P^{-1}$ and $HP=PH$),

$$x_0 HP k_i^{-1} \cap x_0 HP k_j^{-1} = \emptyset.$$

By \eqref{eqn:k:1} we must have 

$$N=K^{O(1)}.$$ 

By the maximality assumption, for any $k$ with $d(k,id_G)\le \delta$ there exists $k_i$ such that $kHP \cap k_i HP \neq \emptyset$, thus 

$$k\in k_iHP (HP)^{-1} \subset k_i HP^2.$$
\end{proof}

Let $C_0=N+1=O(K^{O(1)})=O(c^{-O(1)})$ be the constant obtained from the proof of Claim \ref{claim:k:covering:discrete}. We can always assume $C_0 \ge 2$. As we can always extend a maximal collection of disjoint translations of form \eqref{eqn:collection} with respect to $\delta_1$ (which plays the role of $\delta$ in Claim \ref{claim:k:covering:discrete}) to a maximal one with respect to $\delta_2 \ge \delta_1$, and because we have seen from the proof of Claim \ref{claim:k:covering:discrete} that each such maximal collection has at most $C_0$ members as long as $\delta\le \delta_0$, there exists an integer $l=O_K(1)$ such that 

\begin{equation}\label{eqn:CC}
\CC_{\delta_0/C_0^{l+1}} =\CC_{\delta_0/C_0^{l-1}},
\end{equation}
 
where $\CC_{\delta_0/C_0^{l+1}}$ and $\CC_{\delta_0/C_0^{l-1}}$ are such two maximal collections with respect to $\delta_1= \delta_0/C_0^{l+1}$ and $\delta_2=\delta_0/C_0^{l-1}$. Let $\{k_{l-1,1},\dots,k_{l-1,r}\}=\{k_{l+1,1},\dots,k_{l+1,r}\}$ be the representatives with respect to $\CC_{\delta_0/C_0^{l+1}}$ (and equivalently, with respect to $\CC_{\delta_0/C_0^{l-1}}$, and hence also with respect to $\CC_{\delta_0/C_0^l}$) where $r \le C_0$.

In the next step we define $T$ to be the collection of the left cosets $k_{l,i}\langle HP\rangle$. Because of our definition \eqref{eqn:collection} that every maximal collection contains $HP$, $T$ contains the coset $t_{id_G}=\langle HP \rangle$.  


One  can put a  ``distance" $d_T$ on the coset elements of $T$ as

$$d_T(x \langle HP \rangle ,y \langle HP \rangle):= \inf_{g\in G} \Big \{d(g,id_G):  gx \langle HP \rangle = y \langle HP \rangle \Big \}.$$

We remark that if $d_T(.)$ is well defined on the coset elements of $T$ then it does not depend on the representatives and it is symmetric. To show that it is well defined, for any vertex pair $(t,t')=(k_{l,i} \langle HP \rangle, k_{l,j} \langle HP \rangle)$ in $T$, because $d(k_{l,i},id_G)$ and $d(k_{l,j},id_G)$ are both finite,  $d_T(k_{l,i} \langle HP \rangle, id_G \langle HP \rangle)$ and $d_T(id_G \langle HP \rangle, k_{l,j} \langle HP \rangle)$ are finite, and so is  $d_T(k_{l,i} \langle HP \rangle, k_{l,j} \langle HP \rangle)$ by the triangle inequality with respect to $d(.)$. More precisely,

\begin{align}\label{eqn:tt'}
d_T(t,t') = d_T(k_{l,i} \langle HP \rangle, k_{l,j} \langle HP \rangle) &\le  d_T(k_{l,i} \langle HP \rangle, id_G \langle HP \rangle)+  d_T(id_G \langle HP \rangle, k_{l,j} \langle HP \rangle) \nonumber \\   
&\le d(k_{l,i},id_G) +  d(k_{l,j},id_G)\le 2\delta/C_0^{l+1} \le \delta/C_0^l.
\end{align}

Next we consider the weighted complete graph $G$ on $T$ with weights $w(f) = d_T(t,t')$ on any edge $f=(t,t') \in \binom{T}{2}$.
\begin{claim}\label{claim:tree}
There exists a spanning tree $F$ of $G$ with the following properties

\begin{enumerate}
\item for each pair $(t,t')\in \binom{T}{2}$, each weight of the edges on the tree path connecting $t$ to $t'$ is at most $d_T(t,t')$;
\vskip .1in 
\item one can also choose corresponding coset representatives $x_t$ for each $t\in T$ such that as long as $(t,t')$ is an edge of $F$
$$d_T(t,t')=d(x_t,x_{t'});$$
\item furthermore, for any $(t,t')\in \binom{T}{2}$

\begin{equation}\label{eqn:ts}
d_T(t,t') \asymp_K d(x_t,x_{t'}).
\end{equation}

\end{enumerate}

\end{claim}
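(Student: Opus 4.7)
My plan is to take $F$ as a minimum spanning tree (MST) of the weighted graph $G$ with weights $w(t,t') = d_T(t,t')$, and then to choose the representatives $x_t$ by propagating outward from a chosen root of $F$. Property (1) will then follow immediately from the classical bottleneck/minimax characterization of MSTs: for any pair $(t,t')$, adding the edge $(t,t')$ to $F$ creates a unique cycle whose remaining edges form the $F$-path from $t$ to $t'$, and in an MST every edge in such a cycle has weight at most $w(t,t') = d_T(t,t')$; in particular every edge on that tree path does.

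For property (2), I would root $F$ at $t_{id_G} = \langle HP \rangle$ and set $x_{t_{id_G}} := id_G$. Traversing $F$ outward from the root, for each tree edge $(t,t')$ with $t$ the parent, I would pick $g \in G$ attaining the infimum in
$$d_T(t,t') = \inf \Big \{ d(g,id_G) : g x_t \langle HP \rangle = t' \Big \},$$
and set $x_{t'} := g x_t$. The infimum is attained because $\supp(\mu)$ is finite, so the set of values $d(g,id_G)$ as $g$ ranges over a fixed coset is a discrete subset of $[0,\infty)$. By right-invariance of $d$ (Fact \ref{fact:dmu}) one has $d(x_t, x_{t'}) = d(id_G, g) = d_T(t,t')$, which gives (2).

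For (3), the lower bound $d_T(t,t') \le d(x_t, x_{t'})$ is immediate by taking $g = x_{t'} x_t^{-1}$ in the infimum (this sends $x_t \langle HP \rangle$ to $x_{t'} \langle HP \rangle = t'$) and using right-invariance. For the matching upper bound, I would take the $F$-path $t = t_0, t_1, \ldots, t_k = t'$ and combine (1), (2), and the triangle inequality,
$$d(x_t, x_{t'}) \le \sum_{i=0}^{k-1} d(x_{t_i}, x_{t_{i+1}}) = \sum_{i=0}^{k-1} d_T(t_i, t_{i+1}) \le k \cdot d_T(t,t'),$$
and since $k \le |T| - 1 \le C_0 - 1 = O_K(1)$, this yields $d(x_t, x_{t'}) \ll_K d_T(t,t')$.

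I do not expect a real obstacle here; the argument is the standard MST-bottleneck-plus-triangle-inequality routine on a graph whose vertex set has size $O_K(1)$. The only mild things to verify in advance are that $d_T$ inherits symmetry and the triangle inequality from $d$ (immediate from right-invariance, and essentially already used in \eqref{eqn:tt'}) and that the relevant infima are attained (immediate from $|\supp(\mu)|<\infty$). These bookkeeping points aside, boundedness of $|T|$ is what ensures the final constant in (3) is truly $O_K(1)$.
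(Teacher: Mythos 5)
Your proposal is correct and follows essentially the same route as the paper: the paper builds $F$ by Prim's greedy algorithm (repeatedly adding the least-weight edge leaving the current subtree, which is exactly the MST/bottleneck construction you invoke), propagates representatives by $x_{t'}:=gx_t$ with $g$ realizing $d_T(t,t')$, and derives (3) from (1), (2) and the triangle inequality over a path of length $O_K(1)$. Your added remarks on attainment of the infimum (finiteness of $\supp(\mu)$) and the trivial inequality $d_T(t,t')\le d(x_t,x_{t'})$ are fine points the paper treats implicitly.
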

The proof of this claim follows \cite[Lemma 3.2]{Tao-LO}, we present it here for the reader's convenience.

\begin{proof}(of Claim \ref{claim:tree}) We construct the tree and the coset representatives by a simple greedy algorithm starting from step 0 with $F_0=\{id_G\}$. Assume that at step $i$ we already obtain a subtree $F_i$ with the coset representative $x_t$ for each $t\in F_i$, we then find an edge connecting $F_i$ to $T\backslash{V(F_i)}$ of least weight, say $e=(t,t')$. It is clear that for any $t''\in F_i$ 

 \begin{equation}\label{eqn:min}
 d_T(t,t')\le d_T(t'',t'). 
 \end{equation}

Let $g$ be an element from $G$ such that $d(g,id_G) = d_T(t,t')$ by the definition of $d_T(.)$.  In the next step set $x_{t'}:= g x_t $ and $F_{i+1}: =F_i \cup \{e\}$, we continue the  process until the last vertex. 

The first claim then follows from \eqref{eqn:min} and the way $F$ was constructed. The second claim also follows because $x_t$ do not change along the construction process. For the third claim, first recall that $|V(T)| = O_K(1)$. Assume that $t_0=t,t_{1},\dots,t_{j-1}, t_j=t'$ is the $F$-path connecting $t$ to $t'$. By the triangle inequality 

\begin{align*}
d_T(t,t') &\le \sum_{t_0=t,t_{1},\dots,t_{j-1}, t_j=t', (t_{i},t_{i+1}) \in F} d_T(t_i,t_{i+1}) \\ 
&= \sum_{t_0=t,t_{1},\dots,t_{j-1}, t_j=t', (t_{i},t_{i+1}) \in F} d(x_{t_i},x_{t_{i+1}})  \le |V(T)| d(x_t,x_{t'}),
\end{align*}

where in the last estimate we used the first claim (1). For the other direction, again by (1) and by the triangle inequality

\begin{align*}
d_T(t,t') &\ge \frac{1}{j}\sum_{t_0=t,t_{1},\dots,t_{j-1}, t_j=t', (t_{i},t_{i+1}) \in F} d_T(t_i,t_{i+1}) \\ 
&= \frac{1}{j}\sum_{t_0=t,t_{1},\dots,t_{j-1}, t_j=t', (t_{i},t_{i+1}) \in F} d(x_{t_i},x_{t_{i+1}}) \ge \frac{1}{|V(T)|} d(x_t,x_{t'}).
\end{align*}

\end{proof}

Set 

$$X:=\{x_t: t\in T\}.$$

Then $|X|\le C_0 =O(K^{O(1)})$ and the cosets $x\langle HP \rangle, x\in X$ are disjoint. Furthermore, assume that $x$ comes from the coset $t=k_{l,i} \langle HP \rangle $, then 

\begin{equation}\label{eqn:dx}
d(x,id_G) \le |V(T)| d_T(t,t_{id_G}) \le |V(T)| d(k_{l,i},id_G) \le |V(T)|\delta_0/C_0^{l+1}\le \delta_0/C_0^{l},
\end{equation}

where we recall that $k_{l,i}$ is one of the representatives of $\CC_{\delta_0/C_0^{l+1}}$. 

By Claim \ref{claim:k:covering:discrete} and by \eqref{eqn:CC} we have 

$$x \in k_{l,i} HP^{2}.$$

In other words, 

$$k_{l,i} \in x HP^{2}.$$

We also notice that this holds for any representative $k_{l,i}$ where $x\in k_{l,i}\langle HP \rangle$. Thus, again by Claim \ref{claim:k:covering:discrete}  and \eqref{eqn:CC}

\begin{equation}\label{eqn:s:reverse}
\Big \{g: d(g,id_G)\le \delta/C_0^{l-1} \Big \} \subset \cup_{i=1}^r k_{l,i}HP^2 \subset \cup_{x\in X}^\ast x HP^4,
\end{equation}

where the disjointness comes from the mentioned fact that $x\langle HP \rangle, x\in X$, are disjoint.

We now establish the connection between $\|.\|_{HP,X}$ and $d(.)$.

\begin{lemma}\label{lemma:HP,S:norm} As long as $d(g,id_G) \le \delta_0/C_0^l$, we have
$$\|g\|_{HP,X} \ll d^{1-O(\eps)}(g,id_G).$$
\end{lemma}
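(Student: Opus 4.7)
The plan is to bootstrap a crude $O(1)$ bound for $\|g\|_{HP,X}$, obtained directly from the cover \eqref{eqn:s:reverse}, into the claimed $d^{1-O(\eps)}$ estimate by iterating the argument on powers of $g$.

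Let $\eta := d(g, id_G) \le \delta_0/C_0^l$. For each $x \in X$, right-invariance and symmetry of $d$ (Fact \ref{fact:dmu}) together with \eqref{eqn:dx} yield
\begin{equation*}
d(g^{-1}x, id_G) \le d(g^{-1}, id_G) + d(x, id_G) \le \eta + \delta_0/C_0^l \le \delta_0/C_0^{l-1}.
\end{equation*}
By \eqref{eqn:s:reverse} and the disjointness of the cosets $\{y\langle HP\rangle: y \in X\}$, there is a unique $y =: \sigma(x) \in X$ with $g^{-1}x \in y HP^4$, equivalently $x^{-1} g \sigma(x) \in HP^4$. Since an element of $HP^4 (HP^4)^{-1}$ lies in $\langle HP \rangle$, disjointness forces $\sigma(x_1) = \sigma(x_2) \Rightarrow x_1 = x_2$, so $\sigma \in Sym(X)$ and we obtain the crude bound $\|g\|_{HP,X} = O(1)$.

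To sharpen this, I consider powers $g^k$ for $1 \le k \le K := \lfloor \delta_0 / (C_0^{l+1} \eta) \rfloor$. The triangle inequality gives $d(g^k, id_G) \le k\eta \le \delta_0/C_0^{l-1}$, so the base step applied to $g^k$ produces a permutation $\sigma_k \in Sym(X)$ with $x^{-1} g^k \sigma_k(x) \in HP^4$. The telescoping identity
\begin{equation*}
x^{-1} g^k \sigma^k(x) = \prod_{j=0}^{k-1} \bigl( \sigma^j(x)^{-1} g \, \sigma^{j+1}(x) \bigr) \in HP^{4k} \subset \langle HP \rangle,
\end{equation*}
combined with coset disjointness, forces $\sigma_k = \sigma^k$. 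Thus this non-commutative $k$-fold product of elements of $HP^4$ already lies in $HP^4$ itself. Invoking the $C$-normal form of $P$ via the Baker--Campbell--Hausdorff-style collecting procedure developed in the proof of Claim \ref{claim:proper}, such a compression forces each factor into $HP_{O(1/k)}$, with polynomial slack absorbed into the $O(\eps)$ exponent. Taking $j = 0$ yields $x^{-1} g \sigma(x) \in HP_{O(1/K)}$ for every $x$, hence $\|g\|_{HP,X} \ll 1/K \ll \eta \cdot C_0^{l+1}/\delta_0 \ll \eta^{1-O(\eps)}$.

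The main obstacle is the compression step: converting the statement ``a non-commutative $k$-fold product of elements of $HP^4$, with factors coupled by conjugation along the $\sigma$-orbits, lies in $HP^4$'' into a smallness statement on each individual factor. In the abelian case the product collapses to the sum $kg$, immediately giving $g \in HP_{O(1/k)}$. In the nilpotent normal-form setting one must instead analyze the cycles of $\sigma$ separately---on an $m$-cycle of $\sigma$ and for $k$ a multiple of $m$ the product telescopes to a conjugate of $g^k$---and then propagate smallness through commutator corrections of size $O(1/D)$ per generator swap by iterated applications of Claim \ref{claim:proper}. Balancing these accumulated corrections against the admissible range of $k$ produces the final $O(\eps)$ loss in the exponent. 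A secondary technicality is verifying that the disjoint cover in \eqref{eqn:s:reverse} remains valid at each of the intermediate scales $HP^4, HP^8, \ldots$ arising in the iteration; this is ensured by choosing the constant $D$ in \eqref{eqn:nil:Q} large enough compared to rank, step, and $C$.
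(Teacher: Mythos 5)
Your setup (the crude $O(1)$ bound from the cover \eqref{eqn:s:reverse}, the induced permutation $\sigma$, and the injectivity argument via coset disjointness) matches the paper, but the ``compression step'' that you yourself flag as the main obstacle is a genuine gap, and the sketch you offer does not close it. The implication you invoke --- that a $k$-fold product of elements of $HP^4$ lying again in $HP^4$ ``forces each factor into $HP_{O(1/k)}$'' --- is false as a general statement: with $r=1$, alternate the factors $u_1^{\lfloor N_1/2\rfloor}$ and $u_1^{-\lfloor N_1/2\rfloor}$; every partial product stays inside $HP$ while no factor is small. Nothing in the collecting procedure of Claim \ref{claim:proper} excludes this kind of cancellation, so the step needs to use more about the specific factors $h_j=\sigma^j(x)^{-1}g\,\sigma^{j+1}(x)$ than their mere membership in $HP^4$. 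Your fallback via the cycles of $\sigma$ does not rescue it: on an $m$-cycle with $m\ge 2$ the telescoping controls only the powers of the full cycle product $x^{-1}g^m x$, so at best you conclude that this $m$-fold product lies in a small dilate, whereas the definition \eqref{eqn:norm:S,HP} of $\|g\|_{HP,X}$ demands smallness of each individual factor $\sigma^j(x)^{-1}g\,\sigma^{j+1}(x)$ --- and the alternating example shows the former does not imply the latter.

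The paper sidesteps the problem by never forming a product of distinct factors. For each $x\in X$ it writes $gx=x'h$ with $h\in HP^4$ and takes powers of the single element $h$ rather than of $g$: right-invariance gives $d(x'h^{j},x'h^{j+1})=d(x',x'h)=d(x',gx)\le(|V(T)|+1)\,d(g,id_G)$, so the triangle inequality keeps $x'h^q$ within distance $q(|V(T)|+1)\,d(g,id_G)$ of $x'$; choosing $q=2^k$ of size comparable to $\delta_0/(C_0^l d(g,id_G))$, the cover \eqref{eqn:s:reverse} and coset disjointness place $h^{2^j}$ in $HP^4$ for every $j\le k$. Because these are all powers of the \emph{same} element, the halving property of Claim \ref{claim:proper}(3) can be iterated $k$ times to yield $h\in HP_{(1+1/D)^k/2^k}\subset HP_{1/q^{1-O(\eps)}}$, which is exactly the claimed bound; no disentangling of a non-commutative product is ever needed. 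To repair your argument you should replace the telescoped product $\prod_{j}\sigma^j(x)^{-1}g\,\sigma^{j+1}(x)$ by the powers of the fixed element $x'^{-1}gx$ and run this argument instead.
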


\begin{proof}(of Lemma \ref{lemma:HP,S:norm}) Consider any $g$ with $d(g,id_G)\le \delta_0/C_0^l$. Then for any $x\in X$, 

$$d(gx,id_G) \le d(gx,x) + d(x,id_G) =d(g,id_G)+d(x,id_G) \le \delta_0/C_0^l+\delta_0/C_0^{l} \le \delta_0/C_0^{l-1},$$

where we used \eqref{eqn:dx}.

Thus, by \eqref{eqn:s:reverse}, $gx \in x' HP^4$ for some $x'\in X$. Write 

$$gx=x'h, \mbox{ for some } h\in HP^4.$$ 

Note that by the definition of $x,x'$

$$d(x,x') \le |V(T)| d(x\langle HP \rangle , x' \langle HP \rangle) \le |V(T)| d(g,id_G).$$ 

Thus

$$d(x',x'h)=d(x',gx) \le d(x',x)+d(x,gx)  \le (|V(T)|+1)d(g,id_G).$$

Again by right invariance and by the triangle inequality

$$d(id_G,h^q) = d(x',x' h^q) \le q  (|V(T)|+1) d(g,id_G).$$

Let $q$ be the largest power of 2 that is smaller than $\delta_0/C_0^l d(g,id_G)$

\begin{equation}\label{eqn:q,g}
 \frac{\delta_0}{2C_0^l d(g,id_G)}  \le q=2^k\le \frac{\delta_0}{C_0^l d(g,id_G)} .
\end{equation}

Thus $d(id_G,h^q) < \delta_0/C_0^{l-1}$, and so by \eqref{eqn:s:reverse} and by the fact that $h\in HP^4$

$$h^q \in  (\cup_{x\in X}^\ast x HP^4) \cap \langle HP \rangle.$$ 

Because $id_G\in X$ and the cosets $x\langle HP \rangle, x\in X$,  are all disjoint, we obtain

$$h^q \in HP^4.$$

By the properness of $HP$, after iterating the third conclusion of Claim \ref{claim:proper} $k$ times, we obtain that $h\in HP_{(1+1/D)^k/2^k} \subset HP_{1/q^{1-O(\eps)}}$ as $D$ was chosen to be larger than $1/\eps$. Hence,

\begin{equation}\label{eqn:h:HP}
\|h\|_{HP} = O(\frac{1}{q^{1-O(\eps)}}) \ll d^{1-O(\eps)}(g,id_G).
\end{equation}

Thus we have

$$\|g\|_{HP,X} \ll d^{1-O(\eps)}(g,id_G).$$

To complete the proof, we note that the map $x\to x'$ above depends on $g$ and it is one-to-one because the representatives $x$ come from different cosets of $\langle HP \rangle$.

\end{proof}



\section{Proof of Theorem \ref{theorem:ILO:non-abelian}: third step}\label{section:ILO:3}

We show the following form of Theorem \ref{theorem:ILO:non-abelian}.

\begin{theorem}[Structures for $a_m$'s]\label{theorem:ILO:non-abelian'} There exists a coset nilprogression $HP$ in $O(1)$-normal form of small rank and step with  $|HP| =O( \rho^{-1})$, and a finite set $X$ of cardinality $O(1)$ such that or all $1 \le m \le n^{1-\eps}$,

$$\|a_m {a_m'}^{-1}\|_{HP,X}^2 \le \frac{1}{n_0^{1-O(\eps)}}, \mbox{ for all } a_m,a_m' \in A_{j_0-m}.$$ 
\end{theorem}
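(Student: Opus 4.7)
The plan is to exploit the convexity identity arising from the decomposition $\mu*\eta_m = \sum_{a\in A_{j_0-m}} \mu_{j_0-m}(a)\,F_a$, where $F_a := \mu*\eta_{m-1}*\delta_a$, combined with the assumption \eqref{eqn:discrete:2}, to isolate the contribution of individual pairs $a_m,a_m'\in A_{j_0-m}$; then to transfer the resulting semi-metric smallness into $\|\cdot\|_{HP,X}$-smallness via (a variant of) Lemma \ref{lemma:HP,S:norm}.

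First I note that right-translation preserves the $\ell^2$-norm, so $\|F_a\|_2 = \|\mu*\eta_{m-1}\|_2$ is independent of $a$. The convexity identity
\begin{equation*}
\Big\|\sum_a w_a F_a\Big\|_2^2 = \sum_a w_a\|F_a\|_2^2 - \frac{1}{2}\sum_{a,a'} w_a w_{a'}\|F_a-F_{a'}\|_2^2,
\end{equation*}
applied with weights $w_a = \mu_{j_0-m}(a)$, yields
\begin{equation*}
\frac{1}{2}\sum_{a,a'}\mu_{j_0-m}(a)\mu_{j_0-m}(a')\|F_a-F_{a'}\|_2^2 = \|\mu*\eta_{m-1}\|_2^2-\|\mu*\eta_m\|_2^2 \le \frac{2\|\mu\|_2^2}{n_0},
\end{equation*}
where the final inequality uses \eqref{eqn:discrete:2} at index $m$. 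Next I invoke \eqref{eqn:p_0} with $p_0\ge n^{-\eps^3}$ to bound the diagonal factor by $\mu_{j_0-m}(a)\mu_{j_0-m}(a')\ge p_0^2$, and thereby extract, for every individual $a_m,a_m'\in A_{j_0-m}$,
\begin{equation*}
\|F_{a_m}-F_{a_m'}\|_2^2 \le \frac{4\|\mu\|_2^2}{n_0 p_0^2} \le \frac{\|\mu\|_2^2}{n_0^{1-O(\eps)}}.
\end{equation*}
A change of variable $y = xa_m^{-1}$ rewrites this as $d_\phi(a_m {a_m'}^{-1}, id_G)^2 \le 1/n_0^{1-O(\eps)}$, where $\phi := \mu*\eta_{m-1}$ and $d_\phi$ is the analogue of $d_\mu$ with $\phi$ in place of $\mu$.

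The main obstacle will be converting this $d_\phi$-smallness into a $\|\cdot\|_{HP,X}$-bound for the original $HP$ and $X$ from Section \ref{section:ILO:2}. Indeed, Young's inequality only yields $d_\phi \lesssim d_\mu$, which is the wrong direction, so one cannot blindly plug into Lemma \ref{lemma:HP,S:norm}. My plan to circumvent this is to rerun the arguments of Sections \ref{section:ILO:1}--\ref{section:ILO:2} with $\phi$ replacing $\mu$: since $\|\phi\|_2 \asymp \|\mu\|_2$ by \eqref{eqn:discrete:2} at index $m-1$, and since $\|\phi*\mu_{j_0-m}\|_2 = \|\mu*\eta_m\|_2$ still fulfills the $L^2$-stability condition \eqref{eqn:assumption}, Theorem \ref{theorem:BSzG:structure} applies to $\phi$ and produces a coset nilprogression of order $\rho^{-1}$. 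The key remaining point, which I would verify by a covering argument using the fact that $\mu$ and $\phi$ both have mass $\gg 1$ on translates of nilprogressions of the same cardinality, is that this new nilprogression can be absorbed into the original $HP$ up to $O(1)$-complexity adjustments, and the representative set $X$ constructed from $d_\phi$ instead of $d_\mu$ can be taken to coincide with the original $X$ (again up to bounded blow-up). With this extension in hand, Lemma \ref{lemma:HP,S:norm} upgrades to $\|g\|_{HP,X}\ll d_\phi(g, id_G)^{1-O(\eps)}$ for $d_\phi(g, id_G)$ sufficiently small, and applying this to $g = a_m {a_m'}^{-1}$ yields the desired bound $\|a_m {a_m'}^{-1}\|_{HP,X}^2 \le 1/n_0^{1-O(\eps)}$.
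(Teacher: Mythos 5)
Your first half is correct, and it is in fact an averaged version of what the paper does: the identity
\begin{equation*}
\tfrac{1}{2}\sum_{a,a'}\mu_{j_0-m}(a)\mu_{j_0-m}(a')\|F_a-F_{a'}\|_2^2=\|\mu*\eta_{m-1}\|_2^2-\|\mu*\eta_m\|_2^2\le \tfrac{2}{n_0}\|\mu\|_2^2,
\end{equation*}
together with $\mu_{j_0-m}(a)\mu_{j_0-m}(a')\ge p_0^2$, is a clean way to get smallness for \emph{every} pair $a,a'$ rather than for some pair. The problem is exactly the one you flag and then do not resolve: what you control is $d_{\phi}(a_m{a_m'}^{-1},id_G)$ for $\phi=\mu*\eta_{m-1}$, a semi-metric whose reference measure is not $\mu$ and which moreover changes with $m$. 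Lemma \ref{lemma:HP,S:norm} is proved only for $d_\mu$, and the entire apparatus it rests on --- the coset nilprogression $HP$ from Theorem \ref{theorem:BSzG:structure}, the maximal separated collections \eqref{eqn:collection}, the scale $\delta_0/C_0^l$ at which \eqref{eqn:CC} stabilizes, the tree and the representative set $X$ --- is built from $d_\mu$. Your proposed fix (rerun Sections \ref{section:ILO:1}--\ref{section:ILO:2} for each $\phi_m$ and ``absorb'' the output into the original $HP$ and $X$) is an assertion, not an argument. Concretely: (a) Theorem \ref{theorem:BSzG:structure} applied to $\phi_m$ only gives $\phi_m(x_mH_mP_m)\gg_K 1$ with a constant that may be far below $1/2$, so $x_0HP$ and a translate of $H_mP_m$ need not intersect at all, let alone be commensurable; (b) even commensurability of the sets $HP$ and $H_mP_m$ would not transfer the bound, because the statement $\|g\|_{HP,X}\le\lambda$ at the fine scale $\lambda\approx n_0^{-1/2}$ concerns the dilate $HP_\lambda$, which is not controlled by a set-theoretic comparison of $HP$ with $H_mP_m$; and (c) whatever is done must be uniform over all $m\le n^{1-\eps}$, since the theorem demands a single $HP$ and a single $X$.

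The paper sidesteps the change of reference measure entirely by never leaving $d_\mu$. Claim \ref{claim:typpair':discrete} uses the same two ingredients as you (the $L^2$ increment bound of Claim \ref{claim:typpair:discrete} and the atom lower bound $p_0\ge n^{-\eps^3}$), but in a pigeonhole form: for given $a,a'\in A_{j_0-m}$, the pairs $(ga,ha')$ with $(g,h)$ typical for $\eta_{m-1}$ carry $\eta_m\times\eta_m$-mass at least $p_0^2/2$, which exceeds the mass $4/n_0^{\eps}$ of the atypical pairs for $\eta_m$; hence some $(ga,ha')$ is $\eta_m$-typical with $(g,h)$ $\eta_{m-1}$-typical. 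Both typicality statements are inequalities for the \emph{same} $d_\mu$, so right-invariance and the triangle inequality cancel the prefixes $g,h$ and give $d_\mu(a'a^{-1},id_G)\ll n_0^{-1/2+\eps/2}$ directly, after which Lemma \ref{lemma:HP,S:norm} applies verbatim with the original $HP$ and $X$. If you want to keep your convexity identity, you would need to replace the last display of your computation by an argument that stays in $d_\mu$; as written, the passage from $d_\phi$ to $\|\cdot\|_{HP,X}$ is a genuine gap.
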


\begin{proof}(of Theorem \ref{theorem:ILO:non-abelian'}) First observe that

$$\eta_m=\mu_{j_0}*\dots*\mu_{j_0-m} := \eta_{m-1}*\mu_{j_0-m}.$$

\begin{claim}\label{claim:typpair':discrete} For any $a,a' \in A_{j_0-m}$ there is a typical pair with respect to $\eta_m$ of the form $(ga,ha')$, where $(g,h)$ is also a typical pair with respect to $\eta_{m-1}$. 
\end{claim}
\begin{proof}(of Claim \ref{claim:typpair':discrete})
First of all, if we look at the typical pairs of $\eta_{m-1}$, then by Claim \ref{claim:typpair:discrete}

$$\sum_{(g,h) \notin T_{\eta_{m-1}}} \eta_{m-1}(g)\eta_{m-1}(h)\le \frac{4}{n_0^\eps}.$$ 

Let $T_{a,a'}$ be the collection of pairs of words $(g',h')$ in  $\supp(\eta_m) \times \supp(\eta_m)$ of the form $(ga,ha')$ where $(g,h)$ forms a typical pair with respect to $\eta_{m-1}$. Then by \eqref{eqn:p_0}

$$\sum_{(g',h')\in T_{a,a'}} \eta_m(g')\eta_m(h') \ge p_0^2 \sum_{(g,h)\in T_{\eta_{m-1}}} \eta_{m-1}(g)\eta_{m-1}(h)>p_0^2/2.$$ 

On the other hand, by Claim \ref{claim:typpair:discrete} applied to $\eta_m$

$$\sum_{(g',h') \notin T_{\eta_m}} \eta_m(g')\eta_m(h')\le \frac{4}{n_0^\eps}.$$ 

But as $p_0 \ge 1/n^{\eps^3} \ge (8/n_0^\eps)^{1/2}$, we have

 $$T_{a,a'} \cap T_{\nu} \neq \emptyset.$$
 
So there is a typical pair $(g,h)$ with respect to $\eta_m$ satisfying the conclusion.
\end{proof}

Let $HP$ be the coset nilprogression obtained from Theorem \ref{theorem:BSzG:structure}, for which by \eqref{eqn:passing:l2}

$$|HP|=O( \rho^{-1}).$$ 

For any $1\le m\le n^{1-\eps}$, and for any $a,a'\in A_{j_0-m}$ consider a $\nu_{m-1}$-typical pair $(g,h)$ so that $(ga,ha')$ is also a $\nu_m$-typical pair. By right invariance, 

$$d(ga{a'}^{-1},h)=d(ga, ha') \ll n_0^{-1/2+\eps/2}.$$ 

Thus 

$$ d(a' a^{-1} g^{-1}h, id_G)  \ll n_0^{-1/2+\eps/2}.$$

Furthermore, as $(g,h)$ is $\nu_{m-1}$-typical

$$d(g^{-1}h,id_G) \ll n_0^{-1/2+\eps/2}.$$

By the triangle inequality,

\begin{align}\label{eqn:dist:a}
d(a'a^{-1},id_G)  = d(a'a^{-1}g^{-1} h, g^{-1}h) &\le d(a'a^{-1}g^{-1} h, id_G) +  d(id_G, g^{-1}h)  \ll n_0^{-1/2+\eps/2}.
\end{align}

The proof of Theorem \ref{theorem:ILO:non-abelian'} is then complete by Lemma \ref{lemma:HP,S:norm}.
\end{proof}

We remark that the use of triangle inequality to obtain \eqref{eqn:dist:a} as above is rather wasteful. We suspect the following.

\begin{conjecture}\label{conj:general} Assume that $\mu_i$ are as in Theorem \ref{theorem:ILO:non-abelian} with $id_G\in A_i$ such that $\rho(\mu_1,\dots,\mu_n) \ge n^{-O(1)}$. Then there exist consecutive indices $i_0,\dots, i_0+n^{1-\eps}$ such that
$$\sum_{i_0\le i\le i_0+n^{1-\eps}} \sum_{a_i\in A_i}\|a_i\|_{HP,X}^2 \ll 1.$$
\end{conjecture}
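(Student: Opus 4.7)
My plan is to replace the individual typicality argument of Theorem \ref{theorem:ILO:non-abelian'} with an aggregate $L^2$ telescoping identity, extract a sum of squared $d$-distances over all pairs $(m,a)$, and then convert to $\|\cdot\|_{HP,X}$ via Lemma \ref{lemma:HP,S:norm} combined with a H\"older interpolation.

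\emph{Telescoping and aggregate distance bound.} Write $f_m := \mu*\eta_m = \mu_{[j_0-m, j_0+l_0^\ast]}$, so $f_0 = \mu$ and $f_m = f_{m-1}*\mu_{j_0-m} = \sum_{a\in A_{j_0-m}}\mu_{j_0-m}(a)\,R_a f_{m-1}$, where $R_a g(x):=g(xa^{-1})$ is unitary on $\ell^2(G)$. The standard expansion $\|\sum_a c_a v_a\|_2^2 = \sum_a c_a\|v_a\|_2^2 - \tfrac12\sum_{a,a'}c_a c_{a'}\|v_a-v_{a'}\|_2^2$ for probability weights $(c_a)$ and vectors of equal norm gives
\[
\|f_{m-1}\|_2^2 - \|f_m\|_2^2 = \tfrac12\sum_{a,a'\in A_{j_0-m}}\mu_{j_0-m}(a)\mu_{j_0-m}(a')\,\|R_a f_{m-1} - R_{a'}f_{m-1}\|_2^2.
\]
Summing over $1\le m\le M:=n^{1-\eps}$ telescopes the left side to $\|\mu\|_2^2 - \|f_M\|_2^2 \le 2n^{-\eps}\|\mu\|_2^2$ by \eqref{eqn:discrete:2}. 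Dividing by $\|f_{m-1}\|_2^2 = (1-O(n^{-\eps}))\|\mu\|_2^2$, specializing $a' = id_G$ (permitted since $id_G\in A_i$), and applying $\mu_{j_0-m}(a),\mu_{j_0-m}(id_G)\ge p_0 \ge n^{-\eps^3}$, one obtains
\[
\sum_{m=1}^{M}\sum_{a\in A_{j_0-m}} d_{f_{m-1}}(a,id_G)^2 \ \ll\ n^{-\eps}/p_0^2 \ \le\ n^{-\eps+2\eps^3}.
\]

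\emph{From distance to norm.} Granting for the moment a version of Lemma \ref{lemma:HP,S:norm} in which $d_\mu$ is replaced by $d_{f_{m-1}}$, so that $\|a\|_{HP,X}^2 \ll d_{f_{m-1}}(a,id_G)^{2-c_0\eps}$ for some absolute $c_0$, H\"older's inequality against the total count $N \le M\cdot p_0^{-1} \le n^{1-\eps+\eps^3}$ yields
\[
\sum_{m,a}\|a\|_{HP,X}^2 \ \ll\ \Bigl(\sum_{m,a} d_{f_{m-1}}(a,id_G)^2\Bigr)^{1-c_0\eps/2} N^{c_0\eps/2} \ \ll\ n^{(c_0/2-1)\eps + O(\eps^2)},
\]
which is $\ll 1$ (in fact $n^{-\Omega(\eps)}$) once $\eps$ is small enough relative to $c_0$. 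Note that, by Markov applied to the preceding aggregate bound, all but $o(1)$ of the pairs $(m,a)$ already satisfy $d_{f_{m-1}}(a,id_G)\le \delta_0/C_0^l$, so the applicability condition of Lemma \ref{lemma:HP,S:norm} is essentially automatic.

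\emph{The main obstacle.} Everything above is routine from the existing machinery except for one technical point: Lemma \ref{lemma:HP,S:norm} controls $\|\cdot\|_{HP,X}$ via $d_\mu$ for the fixed reference measure $\mu$ produced in Section \ref{section:ILO:1}, whereas the telescoping naturally produces the family of distances $d_{f_{m-1}}$ attached to the moving measures $f_{m-1}$. The heart of the conjecture is therefore to obtain a \emph{single} coset nilprogression $HP$ and finite set $X$ serving all $f_{m-1}$, $1\le m\le M$, with uniform implied constants. This should be accessible, since $\supp(f_{m-1})\subset \supp(\mu)\cdot\supp(\eta_{m-1})$ and the polynomial growth \eqref{eqn:growth:P} of $HP$ in $C$-normal form suggests that applying Theorem \ref{theorem:BSzG:structure} to $f_{m-1}$ returns the same $HP$ up to a bounded dilation; one then has to re-run the covering argument of Claim \ref{claim:k:covering:discrete} and the tree construction of Claim \ref{claim:tree} so that the resulting $X$ and the constants $\delta_0, C_0, l$ in \eqref{eqn:s:reverse} can be chosen independently of $m$. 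Tracking this uniformity---so that the bound $\|g\|_{HP,X}\ll d_{f_{m-1}}(g,id_G)^{1-O(\eps)}$ holds for all $m$ with constants depending only on $\eps$ and the structural parameters of $\mu$---is the genuine technical step the conjecture demands and is where I expect the real work to lie.
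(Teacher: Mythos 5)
This statement is labeled a \emph{conjecture} in the paper: the author explicitly leaves it open (``we suspect the following''), so there is no proof of record to compare against. Your proposal is an honest sketch rather than a proof, and you correctly identify where it breaks down. The telescoping step itself is sound: writing $f_m=f_{m-1}*\mu_{j_0-m}=\sum_a \mu_{j_0-m}(a)R_af_{m-1}$ and using the polarization identity for probability-weighted averages of equal-norm vectors does give
$\sum_{m\le M}\sum_{a}d_{f_{m-1}}(a,id_G)^2\ll n^{-\eps}p_0^{-2}$,
and this aggregate second-moment bound is exactly the improvement over the per-element triangle-inequality argument of Theorem \ref{theorem:ILO:non-abelian'} that the conjecture calls for. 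The gap is the one you name: the increment at step $m$ is $\|\mu*\eta_{m-1}*\delta_a-\mu*\eta_{m-1}\|_2^2$, i.e.\ a distance relative to the moving measure $f_{m-1}=\mu*\eta_{m-1}$, and in a non-abelian group $\delta_a$ cannot be commuted past $\eta_{m-1}$ to recover $\|(\mu*\delta_a-\mu)*\eta_{m-1}\|_2$, let alone a lower bound by $d_\mu(a,id_G)$. All of Section \ref{section:ILO:2} --- the covering Claim \ref{claim:k:covering:discrete}, the choice of $\delta_0,C_0,l$ in \eqref{eqn:CC}--\eqref{eqn:s:reverse}, the tree and the set $X$ --- is built for the single fixed $\mu$, and Theorem \ref{theorem:BSzG:structure} applied to each $f_{m-1}$ gives \emph{some} approximate structure with no canonical identification with the original $HP$. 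This is not a bookkeeping issue but the substance of the conjecture (note that even the abelian prototype \cite[Eq.~7.9]{TV-JAMS} is proved by Fourier analysis, where multiplicativity of characters dissolves precisely this obstruction), so the proposal does not constitute a proof.

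A secondary quantitative problem: in the H\"older step you conclude $\sum_{m,a}\|a\|_{HP,X}^2\ll n^{(c_0/2-1)\eps+O(\eps^2)}$ and assert this is $\ll 1$ ``once $\eps$ is small enough relative to $c_0$.'' Shrinking $\eps$ does not change the sign of the leading exponent: you need $c_0<2$, i.e.\ the $O(\eps)$ loss in Lemma \ref{lemma:HP,S:norm} must have an implied constant below an explicit threshold. That constant comes from the $(1+1/D)^k/2^k$ iteration in Claim \ref{claim:proper} with $D\le 1/\eps$, so it would have to be tracked explicitly; as written the step is not justified. In short: the first half of your argument is a correct and genuinely useful reduction, but the conjecture remains open after it.
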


This bound, if true, would be a non-abelian analog of \cite[Equation 7.9]{TV-JAMS} and it would directly yield the second conclusion of Theorem \ref{theorem:main:TiepVu}.

\section{The Erd\H{o}s-Littlewood-Offord bound in non-abelian groups}\label{section:LO}
To prove Theorem \ref{theorem:main:TiepVu} we will follow the proof of Theorem
\ref{theorem:ILO:non-abelian}. Assume for contradiction that for some sufficiently large constant 

$$\|\mu_n*\dots*\mu_1\|_\infty \ge C_0 \max\{\frac{1}{s}, \frac{1}{n^{1/2-\delta}}\}.$$

Without loss of generality (by passing to $n/2$ consecutive $\mu_i$, see also \eqref{eqn:passing:l2}) we can assume $\|\mu_n*\dots*\mu_1\|_2^2 \ge C_0 \max\{s^{-1}, n^{-1/2+\delta}\}$. We will choose $C_0$ to be larger than any other implied constants in the sequel.

Argue as in Section \ref{section:ILO:1}, by \eqref{eqn:dyadic:j},
there exists $0\le k \le n^{1-\eps}$ such that 

$$\frac{\| \mu_{[i_0+l_0-(j+1)n^{1-\eps}+k,
    i_0+2l_0]}\|_2}{\|\mu_{[i_0+l_0-(j+1)n^{1-\eps}+k+1,
    i_0+2l_0]}\|_2} \ge 1 -\frac{1}{n^{1-\eps}}.$$

Set $j_0=i_0+l_0-(j+1)n^{1-\eps}+k+1$ and $l_0^\ast =
i_0+2l_0-j_0$. We obtain the following analog of Lemma \ref{lemma:dyadic}.

\begin{lemma}\label{lemma:dyadic'} The exist $j_0,l_0^\ast$ with $l_0^\ast \ge n^{1-\eps}$ such that
\begin{equation}\label{eqn:discrete:1'}
\|\mu_{[j_0-l_0^\ast,j_0+l_0^\ast]}\|_2 \ge c \max \Big\{\|\mu_{[j_0-l_0^\ast,j_0-1]}\|_2, \|\mu_{[j_0,j_0+l_0^\ast]}\|_2 \Big \}
\end{equation}
and
\begin{equation}\label{eqn:discrete:2'}
|\mu_{[j_0-1,j_0+l_0^\ast]}\|_2 \ge (1-n^{1-\eps}) \|\mu_{[j_0,j_0+l_0^\ast]}\|_2.  
\end{equation}
\end{lemma}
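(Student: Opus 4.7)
The strategy is to adapt the proof of Lemma \ref{lemma:dyadic}, refining the pigeonhole step from coarse $n^{1-\eps}$-block ratios to single-step ratios. The main computation is already sketched in the text preceding the statement; the plan is to explain how \eqref{eqn:discrete:1'} and \eqref{eqn:discrete:2'} follow once $j_0$ and $l_0^\ast$ are chosen as in that text.

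To obtain \eqref{eqn:discrete:2'}, I would telescope
$$\prod_{k=0}^{n^{1-\eps}-1}\frac{\|\mu_{[i_0+l_0-(j+1)n^{1-\eps}+k,\,i_0+2l_0]}\|_2}{\|\mu_{[i_0+l_0-(j+1)n^{1-\eps}+k+1,\,i_0+2l_0]}\|_2}=\frac{\|\mu_{[i_0+l_0-(j+1)n^{1-\eps},\,i_0+2l_0]}\|_2}{\|\mu_{[i_0+l_0-jn^{1-\eps},\,i_0+2l_0]}\|_2}.$$
Each factor is at most $1$ by Claim \ref{claim:Young:discrete}, while the full product is bounded below by a positive constant thanks to \eqref{eqn:dyadic:j}. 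Taking $n^{1-\eps}$-th roots (the geometric-mean pigeonhole) then forces some factor to be at least $1-1/n^{1-\eps}$ for $n$ large, singling out the index $k$ in the preamble. Substituting the definitions of $j_0$ and $l_0^\ast$ turns that single factor into $\|\mu_{[j_0-1,j_0+l_0^\ast]}\|_2/\|\mu_{[j_0,j_0+l_0^\ast]}\|_2$, which is exactly \eqref{eqn:discrete:2'}.

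For \eqref{eqn:discrete:1'} together with the lower bound $l_0^\ast\ge n^{1-\eps}$, I would reuse the proof of \eqref{eqn:discrete:1}. Because the drift $|j_0-(i_0+l_0)|$ is at most $(j+1)n^{1-\eps}=O(n^{1-\eps/2})$ while $l_0\ge n^{1-\eps/2}$, the value $l_0^\ast=i_0+2l_0-j_0$ is comparable to $l_0$ (and in particular exceeds $n^{1-\eps}$), and both half-intervals $[j_0,j_0+l_0^\ast]$ and $[j_0-l_0^\ast,j_0-1]$ have length $\Theta(l_0)$ and sit within an $O(n^{1-\eps/2})$-shift of $[i_0,i_0+4l_0]$. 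Invoking Claim \ref{claim:dyadic'} with the maximising $i$ placed inside each of these intervals, and passing from subintervals to the full window via the monotonicity bound $\|\mu*\nu\|_2\le\min\{\|\mu\|_2,\|\nu\|_2\}$ of Claim \ref{claim:Young:discrete}, delivers \eqref{eqn:discrete:1'} up to an $O(1)$ loss in the constant $c$. The only obstacle is the corresponding book-keeping to confirm that these shifted windows remain comparable to the dyadic block supplied by Claim \ref{claim:dyadic'}; no new idea is needed beyond the proof of Lemma \ref{lemma:dyadic}.
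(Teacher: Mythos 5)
Your proposal is correct and follows the paper's own route: the paper likewise extracts the single-step index $k$ by telescoping the block ratio of \eqref{eqn:dyadic:j} into $n^{1-\eps}$ unit-step ratios (each at most $1$ by Claim \ref{claim:Young:discrete}) and pigeonholing, and then defines $j_0,l_0^\ast$ exactly as you do, with \eqref{eqn:discrete:1'} inherited from the argument for Lemma \ref{lemma:dyadic}. Your explicit verification of the geometric-mean pigeonhole and of the comparability $l_0^\ast=\Theta(l_0)\ge n^{1-\eps}$ supplies details the paper leaves implicit, but no new idea is involved.
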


Set $\mu:= \mu_{[j_0,j_0+l_0^\ast]}, \nu:=\mu_{[j_0-l_0^\ast, j_0-1]}$,
we follow Section \ref{section:ILO:1} to obtain Theorem
\ref{theorem:BSzG:structure} for $\mu,\nu$. 

In the next step, let 

$$n_0=n^{1-\eps} \mbox{ and } \eta = \mu_{j_0-1}.$$ 

Note that in contrast to Section \ref{section:ILO:2} and Section \ref{section:ILO:3}, our $n_0$ here is large and we will only focus on one special $\eta$ (instead of many $\eta_m$). By \eqref{eqn:discrete:2'} we have 

$$\|\mu*\eta \|_2 \ge (1-1/n_0)\|\mu\|_2.$$

By the argument of Section \ref{section:ILO:2}, especially by combining Claim \ref{claim:typpair:discrete} (for $\eta_m=\mu_{j_0-1}$), equation \eqref{eqn:def:typical'}  and Lemma \ref{lemma:HP,S:norm}, we obtain the following analog of
Theorem \ref{theorem:ILO:non-abelian'}.

\begin{theorem}
 There exists a coset nilprogression $HP$ in $O(1)$-normal form of
 small rank and step with  $|HP| =O( \frac{1}{C_0}\min\{s, n^{1/2-\delta}\})$, and a finite set
 $X$ of cardinality $O(1)$ and a distribution $\mu_{j_0-1}$ whose support contains a pair
 $\{a,a'\}$ such that $a{a'}^{-1}$ has order at least $s$ and

$$\|a{a'}^{-1} \|_{HP,X}^2 \le n_0^{-1+O(\eps)} < n^{-1+O(\eps)}.$$ 
\end{theorem}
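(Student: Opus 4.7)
The plan is to specialize the three-step strategy of Sections \ref{section:ILO:1}--\ref{section:ILO:3} to the present setting, where in place of the family $\eta_m$ running over $m \le n^{1-\eps}$ we only need a single distribution $\eta := \mu_{j_0-1}$. First, I pass from $L^\infty$ to $L^2$ via Claim \ref{claim:Young:discrete}, obtaining $\|\mu_n*\dots*\mu_1\|_2^2 \ge C_0 \max\{s^{-1}, n^{-1/2+\delta}\}$ after restricting to $n/2$ consecutive terms. Running the dyadic iteration of Lemma \ref{lemma:dyadic'} extracts $j_0$ and $l_0^\ast \ge n^{1-\eps}$, and I set $\mu := \mu_{[j_0, j_0+l_0^\ast]}$ and $\nu := \mu_{[j_0-l_0^\ast, j_0-1]}$, so that both \eqref{eqn:discrete:1'} and \eqref{eqn:discrete:2'} hold. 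The third monotonicity property from Claim \ref{claim:Young:discrete} yields $\|\mu\|_2 \ge \|\mu_n*\dots*\mu_1\|_2$, hence $\|\mu\|_2^{-2} \le C_0^{-1}\min\{s, n^{1/2-\delta}\}$, which is exactly the size bound the theorem asks of $HP$.

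Feeding \eqref{eqn:discrete:1'} into Theorem \ref{theorem:BSzG:structure} then produces the coset nilprogression $HP$ in $O(1)$-normal form of $O(1)$ rank and step, with $|HP| \ll \|\mu\|_2^{-2}$. Carrying out the construction of Section \ref{section:ILO:2} (the maximal disjoint-translate argument of Claim \ref{claim:k:covering:discrete} and the spanning-tree selection of Claim \ref{claim:tree}) produces the finite set $X$ of size $O(1)$ together with Lemma \ref{lemma:HP,S:norm}, comparing the semi-metric $d_\mu(\cdot, id_G)$ to $\|\cdot\|_{HP, X}$. With $n_0 := n^{1-\eps}$ and $\eta := \mu_{j_0-1}$, inequality \eqref{eqn:discrete:2'} expands as $2(\|\mu\|_2^2 - \|\mu*\eta\|_2^2) \le 4 n_0^{-1}\|\mu\|_2^2$, and rerunning the proof of Claim \ref{claim:typpair:discrete} shows that the collection of \emph{non-typical} pairs $(g,h)\in\supp(\eta)^2$ (those for which $d_\mu(g,h) > n_0^{-1/2+\eps/2}$) carries $\eta\otimes\eta$-mass at most $4 n_0^{-\eps}$.

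By hypothesis, $A_{j_0-1}$ contains a pair $a, a'$ with $a{a'}^{-1}$ of order at least $s$, and the minimum-mass condition \eqref{eqn:p_0} gives $\eta(a)\eta(a') \ge p_0^2 \ge n^{-2\eps^3}$. Because $2\eps^3 < \eps(1-\eps)$ for small $\eps$, one has $p_0^2 > 4 n_0^{-\eps}$, so pigeonhole forces the specified pair $(a, a')$ to lie in the typical set. Right-invariance of $d_\mu$ then yields $d_\mu(a{a'}^{-1}, id_G) \ll n_0^{-1/2+\eps/2}$, and Lemma \ref{lemma:HP,S:norm} (applicable because this bound is eventually below $\delta_0/C_0^l$) converts it into $\|a{a'}^{-1}\|_{HP,X}^2 \ll n_0^{-1+O(\eps)}$. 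The main subtlety, and the only genuine difference from the proof of Theorem \ref{theorem:ILO:non-abelian'}, is that here we need the typical-pair machinery to catch the \emph{prescribed} pair $(a,a')$ rather than some generic sample; this is exactly what the hypothesis $p_0 \ge n^{-\eps^3}$ buys, collapsing the iterative argument of Claim \ref{claim:typpair':discrete} into a single pigeonhole.
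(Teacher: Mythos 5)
Your proposal is correct and follows essentially the same route as the paper: pass to $L^2$, run the dyadic argument of Lemma \ref{lemma:dyadic'}, apply Theorem \ref{theorem:BSzG:structure} to get $HP$ with $|HP|\ll \|\mu\|_2^{-2}\le \frac{1}{C_0}\min\{s,n^{1/2-\delta}\}$, build $X$ and Lemma \ref{lemma:HP,S:norm} as in Section \ref{section:ILO:2} with $n_0=n^{1-\eps}$ and the single $\eta=\mu_{j_0-1}$, and use $p_0^2\ge n^{-2\eps^3}>4n_0^{-\eps}$ to force the prescribed pair $(a,a')$ to be typical, then convert $d_\mu(a{a'}^{-1},id_G)\ll n_0^{-1/2+\eps/2}$ into the claimed $\|\cdot\|_{HP,X}$ bound. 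You have in fact made explicit the one-step pigeonhole that the paper leaves implicit when it says ``by the argument of Section \ref{section:ILO:2}.''
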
  

Now consider the bound $\|a{a'}^{-1}\|_{HP,X} \le n^{-1/2+C\eps}$ for some absolute constant $C$. If we choose $\eps$ so that $\delta>C\eps$, then 

$$|HP|=O(\min\{s, n^{-1/2+\delta}\}) = O(n^{1/2-\delta}) <n^{1/2-C\eps}.$$ 

Thus, the bound $\|a{a'}^{-1}\|_{HP,X} \le n^{-1/2+ C\eps}$ forces $p$ to be in $H$ for any representation of the form $xp\sigma(x)^{-1}$ of $a{a'}^{-1}$ with $p\in HP$. In other words, for all $x\in X$ 

$$a{a'}^{-1} \in x H \sigma(x)^{-1}.$$ 

Replace $x=\sigma(x)$ and iterate the relation $d$ times where $d$ is the order of $\sigma$ in $Sym(X)$. After multiplying the obtained identities, we have

$$(a{a'}^{-1})^{d}\in xHx^{-1}.$$ 

However, this would imply that the order $k$ of $a{a'}^{-1}$ is at most 

$$k \le d|H| =O(|HP|) = O(\frac{1}{C_0}s) <s,$$

where $C_0$ was chosen sufficiently large. This contradicts with our assumption that $a{a'}^{-1}$ must have order at least $s$.

\section{The S\'ark\"ozy-Szemer\'edi's bound in non-abelian groups}\label{section:SSz}
We prove Theorem \ref{theorem:main:SSz}. Assume otherwise, then again by passing to $n/2$ consecutive $\mu_i$  we can assume $\|\mu_n*\dots*\mu_1\|_2^2 \gg n^{-1+\delta}$. By Theorem  \ref{theorem:ILO:non-abelian'}, with $\eps=\delta/2$, there exists a coset nilprogression $HP$ with the following properties

\begin{enumerate} 
\item $P$ has  rank and step $r,s=O(1)$ and $|HP|=O(n^{1-\delta})$; 
\vskip .1in
\item There is a finite set $X$ of cardinality $|X|=O(1)$, and consecutive indices $i_0,\dots, i_0+n'$ with $n'=n^{1-\eps}$ such that 
$$\sup_{i_0\le i\le i_0 +n'} \|a_i{a_i'}^{-1} \|_{HP,X} < 1.$$
\end{enumerate}

More specifically, each element $a_i{a_i'}^{-1}, i_0\le i\le i_0+n^{1-\eps}$, can be written as $x h (x')^{-1}$ for some $x,x'\in X$ and $h\in HP$. However, this is impossible when $\eps=\delta/2$ because the $a_i{a_i'}^{-1}$ are distinct and there are only $|X|^2|HP| = O(n^{1-\delta})$ ways to choose for the values of $a_i{a_i'}^{-1}$ from the set $XHPX^{-1}$.

\section{Proof of Theorem \ref{theorem:sl2}}\label{section:sl2}

Assume otherwise that for some positive constant $A$

$$\rho=\sup_{g \in \Sl_2(\R)} \P(g_n \dots g_1=g) \ge n^{-A}.$$

By Theorem \ref{theorem:ILO:non-abelian}, there exists a nilprogression $HP$ with size $|HP|=O(n^A)$ and there exist a finite set $X$ of cardinality $|X|=O(1)$ and indices $i_0,\dots, i_0+n'$ with $n'=n^{1-O(\eps)}$ such that the following holds: for each $a\in \supp(\mu_i), i_0\le i\le i_0+n'$ there exists a permutation $\sigma_a \in Sym(X)$ such that for all $x\in X$,

$$\left( \begin{array}{cc}
E+\lambda a & -1 \\
1 & 0
\end{array}\right ) \in x HP (\sigma_a(x))^{-1}.$$

By our assumption, among these $n'$ consecutive $\mu_i$, there exists one whose support contains $a,-a$ with $a>\gamma$. We will be focusing on these two elements. For short, write

$$ g_{1}:= \left( \begin{array}{cc}
E+\lambda a & -1 \\
1 & 0
\end{array}\right )
\mbox{ and }  g_{2}:= \left( \begin{array}{cc}
E-\lambda a & -1 \\
1 & 0
\end{array}\right ).
$$

By definition, for any integer $k$ the ball $B_k(g_1,g_2)$ which consists of words of length at most  $k$ in $g_1^{\pm 1}, g_2^{\pm 1}$ has size

\begin{equation}\label{eqn:ball:poly}
|B_k(g_1,g_2)| \le |X HP^k X| = O(k^{O(1)} |HP|) = O(k^{O(1)} n^C),
\end{equation}

where we used \eqref{eqn:growth:P} in the estimate of $HP^k$.

On the other hand, $ (g_{1})^{-1}= ( \begin{array}{cc}
0 & 1 \\
-1 & E+\lambda a
\end{array} ) \mbox{ and } (g_{2})^{-1}= ( \begin{array}{cc}
0 & 1 \\
-1 & E-\lambda a
\end{array} ).
$ So

$$ h_1= g_{1} (g_{2})^{-1}= \left( \begin{array}{cc}
1 & 2\lambda a \\
0 & 1
\end{array}\right ) \mbox{ and }  h_2= (g_{1})^{-1} g_{2}= \left( \begin{array}{cc}
1 & 0 \\
2\lambda a & 1
\end{array}\right ).
$$

Choose $k_0= \lceil 1/ 2\lambda \rceil $ so that $2k_0 \lambda \ge 1$, and consider 

$$ h_1':=h_1^{k_0}= \left( \begin{array}{cc}
1 & 2k_0 \lambda a \\
0 & 1
\end{array}\right ) \mbox{ and }  h_2':=h_2^{k_0} = \left( \begin{array}{cc}
1 & 0 \\
2k_0 \lambda a & 1
\end{array}\right ).
$$

We next use the following lemma. 

\begin{lemma}\label{lemma:free}\cite{Br} If $\mu\in \R$ with $|\mu|\ge
  2$ then the group generated by the matrices 
$ \Big( \begin{array}{cc}
1 & \mu \\
0 & 1
\end{array} \Big) \mbox{ and }   \Big( \begin{array}{cc}
1 & 0 \\
\mu & 1
\end{array} \Big)$ is free.
\end{lemma}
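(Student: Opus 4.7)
The statement is Sanov's classical theorem (or a close relative), and the standard route is via the ping-pong lemma applied to the natural action of $\Sl_2(\R)$ on $\R^2$. Let me sketch this plan.

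Write $a:=\left(\begin{smallmatrix}1&\mu\\0&1\end{smallmatrix}\right)$ and $b:=\left(\begin{smallmatrix}1&0\\\mu&1\end{smallmatrix}\right)$. First I would record that each generator has infinite order: a straightforward induction gives $a^n=\left(\begin{smallmatrix}1&n\mu\\0&1\end{smallmatrix}\right)$ and $b^n=\left(\begin{smallmatrix}1&0\\n\mu&1\end{smallmatrix}\right)$, so as long as $\mu\neq 0$ both $\langle a\rangle$ and $\langle b\rangle$ are infinite cyclic. Thus the goal reduces to showing that in any non-trivial reduced alternating word $a^{n_1}b^{m_1}a^{n_2}b^{m_2}\cdots$ (with all exponents nonzero) the resulting matrix is not the identity.

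The plan is to exhibit a ping-pong pair of subsets of $\R^2$ and then invoke the standard ping-pong lemma. Define
\[
U:=\{(x,y)\in\R^2 : |x|>|y|\},\qquad V:=\{(x,y)\in\R^2 : |y|>|x|\},
\]
which are disjoint and non-empty. The key computation is that for any nonzero integer $n$,
\[
a^n\begin{pmatrix}x\\y\end{pmatrix}=\begin{pmatrix}x+n\mu y\\y\end{pmatrix},\qquad b^n\begin{pmatrix}x\\y\end{pmatrix}=\begin{pmatrix}x\\n\mu x+y\end{pmatrix}.
\]
If $(x,y)\in V$, i.e.\ $|y|>|x|$, then $|x+n\mu y|\ge |n\mu|\,|y|-|x|\ge 2|y|-|x|>|y|$, using $|\mu|\ge 2$ and $|n|\ge 1$, so $a^n(V)\subset U$. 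The symmetric computation gives $b^n(U)\subset V$.

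With these verified, the ping-pong lemma (in the two-group version, with both factors of rank at least $2$) concludes that $\langle a,b\rangle$ is the free product $\langle a\rangle * \langle b\rangle\cong \Z*\Z=F_2$. Concretely, applying an alternating reduced word $w=a^{n_1}b^{m_1}\cdots$ to a point in $U$ (or $V$, depending on the rightmost factor) lands in $V$ (resp.\ $U$) by repeated ping-ponging between the two sets, so $w$ cannot act as the identity matrix. The main obstacle is really just ensuring $|\mu|\ge 2$ is used tightly: one needs strict inequality $|x+n\mu y|>|y|$ on $V$, which is why the bound $|\mu|\ge 2$ (as opposed to $|\mu|>1$) is convenient, since then $|n\mu|\,|y|-|x|\ge 2|y|-|x|>|y|$ automatically. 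With this in hand no further work is needed, and the proof is complete.
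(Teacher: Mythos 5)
Your argument is correct. Note that the paper does not prove this lemma at all: it is quoted from Brenner's 1955 note \cite{Br}, so there is no in-paper proof to compare against. What you give is the standard Sanov-type ping-pong proof, and the hypotheses are verified correctly: the estimate $|x+n\mu y|\ge |n\mu|\,|y|-|x|\ge 2|y|-|x|>|y|$ for $|y|>|x|$ and $n\ne 0$ is exactly where $|\mu|\ge 2$ enters, and it gives $a^n(V)\subset U$, $b^n(U)\subset V$ with $U,V$ disjoint and both cyclic factors infinite, so the two-subgroup ping-pong lemma yields $\langle a,b\rangle\cong\Z*\Z$. Two cosmetic points: the side condition in the ping-pong lemma is that at least one factor has \emph{order} at least $3$ (not ``rank at least $2$''), which holds here since both are infinite cyclic; and for a reduced word that does not begin and end with a power of the same generator, showing it is nontrivial requires the standard step of conjugating by a suitable power of $a$ to reduce to the case $w=a^{n_1}\cdots a^{n_k}$, where $w(V)\subset U$ forces $w\ne 1$. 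Both points are folklore and subsumed in your invocation of the ping-pong lemma, so the proof stands.
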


Thus by Lemma \ref{lemma:free}, for any $k$ 

$$|B_k(g_1,g_2)| \ge |B_{k/2k_0}(h_1',h_2')| \ge 2^{k/2k_0}.$$

However this would contradict with the polynomial bound \eqref{eqn:ball:poly}.

\end{document}